\documentclass[preprint,11pt,3p]{elsarticle}

\usepackage{times}
\usepackage{color}
\usepackage[colorlinks=true]{hyperref}

\usepackage{amsmath,amssymb,amsthm,amscd,mathrsfs}
\numberwithin{equation}{section}

\newtheorem{theorem}{Theorem}[section]

\newtheorem{proposition}{Proposition}[section]
\newtheorem{lemma}{Lemma}[section]

\newcommand{\al}{\alpha}     
\newcommand{\be}{\beta}      
\newcommand{\vare}{\varepsilon}
\newcommand{\lrnr}{\lfloor\rho n\rfloor}

\biboptions{sort&compress}

\journal{Nonlinearity}

\begin{document}
\allowdisplaybreaks

\begin{frontmatter}

\title{On the maximal run-length function in the L\"{u}roth expansion}
\author[label1]{Dingding Yu\corref{cor1}}
\ead{yudingding@hue.edu.cn}

\address[label1]{
School of Mathematics and Statistics,
Hubei University of Education, Wuhan, 430205, China}

\cortext[cor1]{corresponding author}

\begin{abstract}
Let \( \ell_n(x) \) denote the maximal run-length among the first \( n \) digits of the L\"{u}roth expansion of \( x\in(0,1] \). While \( \ell_n(x) \) grows logarithmically, we investigate the finer multifractal properties of the exceptional set where $\ell_n(x)$ exhibits linear growth. Specifically, we establish the Hausdorff dimension of the set  
\[
\left\{ x \in (0,1] : \liminf_{n \to \infty} \frac{\ell_n(x)}{n} = \alpha, \; \limsup_{n \to \infty} \frac{\ell_n(x)}{n} = \beta \right\},
\]
for all \( 0 \le \alpha \le \beta \le 1 \).
\end{abstract}
\begin{keyword}
the L\"{u}roth expansion\sep run-length function\sep Hausdorff dimension
\MSC[2010] 11K50\sep 28A80
\end{keyword}
\end{frontmatter}

\section{Introduction}


Let $T: (0, 1] \to (0, 1]$ be the L\"{u}roth transformation, defined by
\begin{equation}
    T(x) := \left\lfloor 1/x \right\rfloor\left(\left\lfloor 1/x \right\rfloor+1\right)\left(x - \frac{1}{\lfloor 1/x \rfloor + 1}\right),
\end{equation}
where $\lfloor \cdot \rfloor$ denotes the integer part function. For each $x \in (0,1]$, we define $d_1(x):=\lfloor 1/x \rfloor + 1$, and generally $d_n(x) = d_1(T^{n-1}(x))$ for $n \ge 2$. This leads to the L\"{u}roth expansion of $x$:
\begin{equation}\label{LE}
    x = \frac{1}{d_1(x)} + \sum_{n=2}^{\infty} \frac{1}{d_1(x)(d_1(x)-1)\cdots d_{n-1}(x)(d_{n-1}(x)-1)d_n(x)}.
\end{equation}
Here, $d_n(x) \ge 2$ are integers for any $n\ge 1$ and are called the expansion digits. We denote the expansion of $x$ by the sequence of digits $[d_1(x), d_2(x), \ldots]$. 

In this paper, we investigate the asymptotic behavior of maximal run-length function in the L\"{u}roth expansions. For any $x\in(0,1]$, the $n$-th maximal run-length function is defined as
$$\ell_n(x)=\max\left\{k\ge 1: d_{j+1}(x)=\cdots=d_{j+k}(x)\text{ for some }j,0\le j\le n-k\right\},$$
which represents the length of the longest block of identical digits among the first $n$ digits of the L\"{u}roth expansion of $x$. Sun and Xu \cite{sun2018maximal} established a law of large numbers for $\ell_n(x)$:
\begin{theorem}\cite[Theorem 1.1]{sun2018maximal}\label{lesun}
    For almost all $x\in(0,1]$, $\lim\limits_{n\to\infty}\ell_n(x)/\log_2 n=1$.
\end{theorem}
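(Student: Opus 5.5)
The plan is to use the probabilistic structure of the Lüroth digits under Lebesgue measure $\lambda$. They are i.i.d. with
\[
\lambda(d_1=k)=\frac{1}{k(k-1)}, \qquad k\ge 2.
\]
This holds because the cylinder $I_n(d_1,\dots,d_n)$ has length $\prod_{i=1}^n \frac{1}{d_i(d_i-1)}$ and $T$ maps each first-level cylinder linearly onto $(0,1]$. The key quantity is the probability that a fixed block of $k$ consecutive digits is constant:
\[
p_k:=\sum_{j\ge 2}\Big(\frac{1}{j(j-1)}\Big)^k = 2^{-k}+O(6^{-k}).
\]
So $p_k\asymp 2^{-k}$, and this is what makes $\log_2$ the natural scale. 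The limit $1$ is obtained from separate upper and lower bounds.

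\textbf{Upper bound.} Fix $\varepsilon>0$ and set $k_n=\lceil(1+\varepsilon)\log_2 n\rceil$. A union bound over starting positions gives
\[
\lambda(\ell_n\ge k_n)\le n\,p_{k_n}\le C n^{-\varepsilon}.
\]
This is not summable in $n$, so I pass to the subsequence $n_m=2^m$. There $\sum_m \lambda(\ell_{2^{m}}\ge (1+\varepsilon)m)\le C\sum_m 2^{-\varepsilon m}<\infty$, and Borel–Cantelli yields $\ell_{2^m}<(1+\varepsilon)m$ for all large $m$. Since $\ell_n$ is nondecreasing in $n$, for $2^{m-1}\le n<2^m$ we get
\[
\ell_n\le \ell_{2^m}<(1+\varepsilon)m\le (1+\varepsilon)(\log_2 n+1),
\]
hence $\limsup \ell_n/\log_2 n\le 1+\varepsilon$.

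\textbf{Lower bound.} Set $k=k_n=\lfloor(1-\varepsilon)\log_2 n\rfloor$ and split $\{1,\dots,n\}$ into $\lfloor n/k\rfloor$ disjoint blocks of length $k$. By independence, the events that the individual blocks are constant are independent, each with probability $p_k\ge 2^{-k}\ge n^{-(1-\varepsilon)}$. Therefore
\[
\lambda(\ell_n<k)\le (1-p_k)^{\lfloor n/k\rfloor}\le \exp\!\Big(-\frac{n^{\varepsilon}}{2\log_2 n}\Big),
\]
which is summable over all $n$. Borel–Cantelli then gives $\ell_n\ge (1-\varepsilon)\log_2 n-1$ eventually, so $\liminf \ell_n/\log_2 n\ge 1-\varepsilon$.

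Letting $\varepsilon\downarrow 0$ along a countable sequence completes the argument. The only delicate step is the upper bound, where the direct union bound is not summable. The dyadic-subsequence and monotonicity trick resolves this, together with checking that $p_k\le C2^{-k}$ uniformly in $k$, which holds because the $j=2$ term dominates the sum.
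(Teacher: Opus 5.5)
The paper gives no proof of this statement. It quotes it from Sun and Xu and uses it only once, in Case 1 of the upper bound, to get that $E(0,0)$ has full Lebesgue measure. So there is no in-paper argument to compare against.

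Your argument is a correct, self-contained proof by the classical Erd\H{o}s--R\'enyi scheme:
\begin{itemize}
\item \textbf{Upper half.} The union bound $\lambda(\ell_n\ge k)\le n\,p_k$, applied along the dyadic subsequence $n=2^m$, together with the monotonicity of $\ell_n$ in $n$.
\item \textbf{Lower half.} Disjoint blocks of length $\lfloor(1-\varepsilon)\log_2 n\rfloor$ and the independence of the digits.
\end{itemize}
The only L\"uroth-specific input is the two-sided estimate
\[
2^{-k}\le p_k=\sum_{j\ge2}\bigl(j(j-1)\bigr)^{-k}\le 2^{-k}+3\cdot 6^{-k},
\]
which you justify correctly by noting that the $j=2$ term dominates. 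The remaining details also check out:
\begin{itemize}
\item integrality of $\ell_n$ handles the non-integer threshold $(1+\varepsilon)m$;
\item $\lfloor n/k\rfloor\ge n/(2\log_2 n)$ for large $n$;
\item the bound $\exp\bigl(-n^{\varepsilon}/(2\log_2 n)\bigr)$ is summable over all $n$.
\end{itemize}

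One remark about how the paper actually uses this theorem. Only your upper-bound half is needed there. It gives $\ell_n(x)=O(\log n)$, hence $\ell_n(x)/n\to 0$, for almost every $x$, and that is exactly the statement that $E(0,0)$ has full measure. The full law of large numbers is more than Case 1 requires.
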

Furthermore, for any $0 \le \alpha \le \beta \le \infty$, the exceptional set
\[
\Bigl\{x \in (0,1] : \liminf_{n \to \infty} \frac{\ell_n(x)}{\log_2 n} = \alpha, \ \limsup_{n \to \infty} \frac{\ell_n(x)}{\log_2 n} = \beta\Bigr\}
\]
was shown to have full Hausdorff dimension in \cite{sun2018maximal}. Sun and Xu \cite{sun2018maximal} extended this result, let $\varphi:\mathbb{N}\to\mathbb{N}$ be monotonically increasing function satisfying $\lim\limits_{n\to\infty}\varphi(n)=\infty$ and $\liminf\limits_{n\to\infty}\varphi(n)/n=0$, the exceptional set
$$\left\{x\in(0,1]:\liminf_{n\to\infty}\frac{\ell_n(x)}{\varphi(n)}=\al, \limsup_{n\to\infty}\frac{\ell_n(x)}{\varphi(n)}=\be\right\}$$
is also of Hausdorff dimension 1.
In another direction, for a subset of digits $\mathcal{A}\subset \{2,3,\ldots,\}$, let $\ell_n(x,\mathcal{A})$ denote the length of the longest block whose elements all belong to $\mathcal{A}$. For a non-decreasing integer sequence $\{\varphi(n)\}_{n\ge 1}$, Zhou \cite{zhou2022dimension} determined the Hausdorff dimension of the set
$$\left\{x\in (0,1]: \limsup_{n\to\infty}\frac{\ell_n(x,\mathcal{A})}{\varphi (n)}=1\right\}.$$

Motivated by the aforementioned results, we extend this line of research by investigating the asymptotic growth rate of $\ell_n(x)$ relative to $n$. Specifically, for any $0\le \al\le \be\le 1$, we define the exceptional set
\begin{equation}\label{Eab}
    E(\alpha,\beta)=\left\{x\in(0,1]:\liminf_{n\to \infty}\frac{\ell_n(x)}{n}=\alpha,\limsup_{n\to\infty}\frac{\ell_n(x)}{n}=\beta\right\}.
\end{equation}
 We determine the Hausdorff dimension of $E(\alpha,\beta)$. Denote by $\dim_{\mathrm{H}}$ the Hausdorff dimension.

\begin{theorem}\label{mainthm}
    For any $u\ge 0$, denote by $s(u)$ the solution of the equation
    \begin{equation}\label{su}
        \sum_{t=2}^{\infty}\left(\frac{1}{2^ut(t-1)}\right)^s=1.
    \end{equation}
    Let $0\le \alpha\le \beta\le 1$ be two real numbers and  $E(\alpha,\beta)$ be defined as in \eqref{Eab}. Then
    \begin{equation*}
        \dim_{\mathrm{H}}E(\alpha,\beta)=
        \begin{cases}
         1,\ \ \ &\text{if}\ \ \be=0,\cr
         s\left(\frac{\be^2(1-\al)}{(1-\be)[\be-\al(1+\be)]}\right),\ \ \ &\text{if}\ \ 0\le \al<\frac{\be}{1+\be}<\be<1,\cr
        0,\ \ \ &\text{otherwise.}
        \end{cases}
    \end{equation*}
\end{theorem}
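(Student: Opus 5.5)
We treat the three regimes separately and use the Lüroth cylinder structure throughout. The basic fact is that the cylinder $I_n(d_1,\dots,d_n)$ has length $\prod_{i=1}^n \frac{1}{d_i(d_i-1)}$. When $\beta=0$ the set $E(0,0)$ contains the exceptional set from \cite{sun2018maximal} with $\varphi(n)=\log_2 n$, which is in fact almost every point by Theorem~\ref{lesun}. Hence $\dim_{\mathrm H}E(0,0)=1$.

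\textbf{Combinatorial constraint and the ``otherwise'' case.} Suppose $\ell_n(x)/n$ reaches about $\beta$ at times $n_k$. The run realizing it has length about $\beta n_k$ and ends near $n_k$. Once this run is broken, $\ell_m(x)\ge \beta n_k$ for all later $m$. So the ratio can only decay like $\beta n_k/m$ until a new run starts. A new run that reaches ratio $\beta$ at time $n_{k+1}$ must start at $(1-\beta)n_{k+1}$. This run must not begin before the old maximal run ends, since two runs of the same digit would merge. Optimizing over starting times shows the infimum over $[n_k,n_{k+1}]$ is $\beta n_k/((1-\beta)n_{k+1})$. Its largest value is forced when consecutive runs are adjacent, giving $n_{k+1}(1-\beta)=n_k$ and a minimal liminf of $\beta\cdot\frac{1}{1}\cdot$ the corresponding ratio. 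This yields the threshold $\alpha<\beta/(1+\beta)$; I would verify this carefully. For $\beta=1$, or $\alpha\ge\beta/(1+\beta)$ with $\beta>0$, the points must have long runs occupying asymptotically all positions. Covering them by cylinders where a proportion $\to1$ of the digits is determined by a few free choices gives dimension $0$. This uses a standard counting bound: at most $n^{O(1)}\cdot C^{o(n)}$ cylinders, each of length at most $2^{-n(1-o(1))}$.

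\textbf{Lower bound in the middle case.} Fix $n_{k+1}=\lambda n_k$ with $\lambda$ geometric. At each stage, place a run of a fixed digit (digit $2$, costing a factor $\frac12$ per position) of length $\beta n_{k+1}$ ending at $n_{k+1}$. Fill the remaining positions freely, with digits chosen from $\{2,\dots,M\}$ avoiding long runs, for example by forbidding equal consecutive digits or inserting separators. The liminf is attained just before the new run starts, so $\lambda$ is determined by $\alpha=\beta n_k/((1-\beta)n_{k+1})$, i.e.\ $\lambda=\beta/(\alpha(1-\beta))$. Within the period $n_k\to n_{k+1}$ the free positions number $(1-\beta)n_{k+1}-n_k$, and each forced position contributes $2^{-1}$. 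The worst case for the Hölder exponent is at times $n_{k+1}$. There, a proportion $u$ of forced digits per free digit, counted cumulatively, gives dimension $s(u)$ through a mass distribution on a Moran-type set. The relevant $u$ is the ratio of forced positions to free positions, taken along the worst subsequence and accumulated geometrically. I would compute it as $\frac{\beta^2(1-\alpha)}{(1-\beta)[\beta-\alpha(1+\beta)]}$, where the sum of the geometric series produces the factor $\beta-\alpha(1+\beta)$. Equation~\eqref{su} arises because each free digit $t$ carries weight $(t(t-1))^{-s}$ and each free digit is accompanied by $u$ forced digits costing $2^{-us}$ in total. Letting $M\to\infty$ recovers the full sum.

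\textbf{Upper bound (main obstacle).} For $x\in E(\alpha,\beta)$ we extract, along infinitely many $n$, a covering of $E$ by cylinders of order $n$ in which a block of about $\beta n$ identical digits is present. The covering must also account for all earlier forced blocks, which are forced by the liminf constraint. We then show that the forced fraction is at least $u$ up to $\varepsilon$, and sum $\sum |I_n|^{s}$ with $s>s(u)$. The difficulty is proving that the configuration of forced runs is extremal. An arbitrary point might use irregular $n_k$, varying run digits (larger digits only shrink the cylinders further), or runs that are not adjacent. I would handle this through the combinatorial inequality from the second paragraph. It shows that between consecutive record times the cumulative forced length is at least that of the geometric scheme, and the worst case over the position of record times reproduces $u$. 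A final counting step bounds the number of choices of run positions and digits by $e^{o(n)}$, so these choices do not change the exponent.
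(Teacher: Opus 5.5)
\textbf{The dimension-$0$ regime is a genuine gap, and in part the claim cannot be proved at all.} Your $\beta=0$ case is fine. The problem is the bound ``at most $n^{O(1)}\cdot C^{o(n)}$ cylinders'', which treats the alphabet as finite. But $\Sigma=\{2,3,\dots\}$ is infinite. A single free position already gives infinitely many cylinders, and in a covering sum it contributes a factor $\sum_{t\ge 2}(t(t-1))^{-s}$, which is $+\infty$ for every $s\le 1/2$. Hence $s(u)>1/2$ for all $u$, and $s(u)\to 1/2$, not $0$, as $u\to\infty$.

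This is not just a weakness of your method. Free digits tending slowly to infinity buy entropy $\log N$ at length cost about $2\log N$, while a forced $2$ costs only $\log 2$, so forced runs of full density can become negligible. For example, place runs of $2$'s on $[n_k+1,n_k+m_k]$ with $n_{k+1}=k(n_k+m_k)$ and $m_{k+1}=kn_{k+1}$. Fill the gaps with digits from $[2^{k^2},2^{k^2+1})$, consecutive ones distinct. These points lie in $E(0,1)$. The forced cost up to time $n_{k+1}+m_{k+1}$ is $O(1/k)$ times the free entropy, so the equidistributed measure gives $\dim_{\mathrm H}E(0,1)\ge 1/2$. Analogous constructions give $1/2$ for $E(\alpha,1)$ with $\alpha\le 1/2$, and for $E(\frac{\beta}{1+\beta},\beta)$ using nearly adjacent runs whose gaps have length $o(m_k)$ and are filled by suitably large digits. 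This matches the bound $\le 1/2$ that a correct covering argument gives.

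So the value $0$ is right only for $\alpha>\beta/(1+\beta)$, where the set consists of eventually periodic points. The paper's own proof has the same defect. Lemma~\ref{continus}(2) derives $\lim_{u\to\infty}s(u)=0$ from ``$F_u(0+)=+\infty$'' and termwise convergence, but $F_u(s)=+\infty$ for all $s\le 1/2$. Consequently its Case~2 and its treatment of $\alpha=\beta/(1+\beta)$ only establish the bound $1/2$.

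\textbf{The middle case.} Your architecture matches the paper's: record runs, covers by cylinders with all record runs prescribed, a Cantor subset with geometrically growing runs of $2$'s, mass distribution, then $M\to\infty$. However, your computations do not yield the formula you state.

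\begin{itemize}
\item After a record run of length $m$ ends, $\ell_n/n$ keeps decreasing until the next run reaches length $m$, not merely until it starts. In your notation the infimum is $\beta n_k/((1-\beta)n_{k+1}+\beta n_k)$; in the paper's notation this is $\alpha=\liminf m_k/(n_{k+1}+m_k)$. With your version, adjacent runs give the threshold $\beta$ rather than $\beta/(1+\beta)$. Your $\lambda=\beta/(\alpha(1-\beta))$ leads to $u=\beta^2/((1-\beta)(\beta-\alpha))$, whereas the correct $\lambda=\beta(1-\alpha)/(\alpha(1-\beta))$ gives $\zeta$.
\item For the upper bound, the ``extremality'' step you defer is the heart of the matter, and it rests on the flawed inequality above. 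The paper proves $\limsup_k \sum_{j\le k}m_j/(n_k+m_k)\ge \eta:=\beta^2(1-\alpha)/(\beta-\alpha)$ by splitting off $\limsup m_k/(n_k+m_k)=\beta$. It then applies Stolz--Ces\`aro to $\sum_{j<k}m_j/(n_k+m_k)$, using $n_{k+1}\lesssim\frac{1-\alpha}{\alpha}m_k$, $m_{k+1}\lesssim\frac{\beta}{1-\beta}n_{k+1}$ and $n_k+m_k\gtrsim m_k/\beta$. One checks $\eta/(1-\eta)=\zeta$.
\item The run digits, like the free ones, must be summed via $\sum_i (i(i-1))^{-sm}\le C2^{-sm}$, not counted. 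Only the $(n_k+m_k)^{2k}$ position choices are counted, and they are $e^{o(n)}$ because $n_k+m_k$ grows exponentially when $\alpha>0$.
\item Of your two options for avoiding extra runs, only the zero-density separators give the sharp value $s(\zeta)$. Forbidding equal neighbouring free digits strictly lowers the pressure.
\end{itemize}
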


It should be mentioned that similar problems were first considered for the dyadic expansion by Erd\"{o}s and R\'{e}nyi \cite{er70}. Readers may consult the references for further information about run-length functions in dyadic expansions \cite{liwu16, liwu17, mawenwen07, sunxu17, zou11}, $\be$- expansions  \cite{hutongyu16, tongyuzhao16}, continued fraction expansions \cite{songzhou20, tanzhou25, wangwu11} and the power-$\alpha $-decaying Gauss-like expansions \cite{huangchen26}.


\section{Preliminaries}
This section is devoted to some elementary properties and dimensional results of the L\"{u}roth expansion that will be used later.
\begin{lemma}(\cite{dk02})
    The random variable sequence $\{d_n(x)\}_{n\ge 1}$ is independent and identically distributed with respect to the Lebesgue measure.
\end{lemma}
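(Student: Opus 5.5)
The plan is to compute the Lebesgue measure of every cylinder set explicitly and to read off both independence and identical distribution from the fact that this measure factorizes as a product. The argument relies only on the definition of $T$ and of the digits $d_n$; no earlier result of the paper is needed.

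\emph{Step 1 (first digit).} Fix an integer $a\ge 2$. By definition, $d_1(x)=a$ holds exactly when $\lfloor 1/x\rfloor=a-1$, that is, when $a-1\le 1/x<a$. This is equivalent to $x\in\left(\frac1a,\frac1{a-1}\right]$. This interval has length $p(a):=\frac{1}{a(a-1)}$. On it, $T(x)=a(a-1)\left(x-\frac1a\right)$, which is an increasing affine map with slope $a(a-1)$ sending the interval onto $(0,1]$. Its inverse branch $\psi_a(y)=\frac1a+\frac{y}{a(a-1)}$ therefore maps $(0,1]$ affinely onto the interval $\{d_1=a\}$ and contracts lengths by the factor $p(a)$.

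\emph{Step 2 (cylinders of order $n$).} For digits $a_1,\dots,a_n\ge2$, let
\[
I(a_1,\dots,a_n)=\{x\in(0,1]: d_k(x)=a_k,\ 1\le k\le n\}.
\]
Since $d_{k+1}(x)=d_k(Tx)$, we have
\[
I(a_1,\dots,a_n)=\{d_1=a_1\}\cap T^{-1}I(a_2,\dots,a_n)=\psi_{a_1}\bigl(I(a_2,\dots,a_n)\bigr).
\]
By induction on $n$, $I(a_1,\dots,a_n)=\psi_{a_1}\circ\cdots\circ\psi_{a_n}\bigl((0,1]\bigr)$. Each $\psi_a$ is affine with contraction ratio $p(a)$, so this set is an interval of Lebesgue measure
\[
\lambda\bigl(I(a_1,\dots,a_n)\bigr)=\prod_{k=1}^n p(a_k).
\]

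\emph{Step 3 (conclusion).} The telescoping identity $\sum_{a\ge2}\frac{1}{a(a-1)}=\sum_{a\ge2}\left(\frac1{a-1}-\frac1a\right)=1$ shows that $p$ is a probability distribution on $\{2,3,\dots\}$. Fix $k$ and sum the product formula of Step 2 over all free digits $a_j$ with $j\ne k$. This gives $\lambda(d_k=a)=p(a)$ for every $k$ and every $a\ge2$, so the digits are identically distributed. More generally, summing over the unconstrained positions shows that for any finite index set $1\le k_1<\dots<k_m$,
\[
\lambda\bigl(d_{k_1}=b_1,\dots,d_{k_m}=b_m\bigr)=\prod_{i=1}^m p(b_i)=\prod_{i=1}^m\lambda\bigl(d_{k_i}=b_i\bigr).
\]
This is exactly the statement that the $d_n$ are independent.

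The only step requiring care is Step 2, namely the identification of $T^{-1}$ restricted to a first-level cylinder with the affine branch $\psi_a$, including the endpoint conventions. Because cylinders are half-open intervals and the exceptional endpoints form a countable, hence null, set, these conventions do not affect any of the measures involved.
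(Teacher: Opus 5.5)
Your proof is correct and complete. The paper gives no proof of this lemma and only cites Dajani and Kraaikamp, and your argument is the standard one: the product formula $\lambda\bigl(I_n(a_1,\ldots,a_n)\bigr)=\prod_{k=1}^{n}\frac{1}{a_k(a_k-1)}$ (which the paper records separately as Lemma~\ref{lemmalength}), followed by summing out the free digits.

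Your closing caveat about endpoints is unnecessary. Each $\psi_a$ maps $(0,1]$ exactly onto $\{d_1=a\}$, so your cylinder identities are exact set equalities rather than equalities up to null sets.
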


This independence property provides a probabilistic foundation for studying the metric theory of the L\"{u}roth expansion. Let $\Sigma=\{2,3,4,\ldots\}$. For any $n\in \mathbb{N}$ and $(d_1,\ldots,d_n)\in \Sigma^n$, we call
$$I_n(d_1,\ldots,d_n)=\{x\in(0,1]:d_i(x)=d_i~\text{for}~1\le i\le n\}$$
a cylinder of order $n$. The cylinder $I_n(d_1,\ldots,d_n)$ represents the set of the numbers in $(0,1]$, whose L\"{u}roth expansion beginning with $d_1,\ldots,d_n$.
\begin{lemma}(\cite{kf90})\label{lemmalength}
    For any $(d_1,\ldots,d_n)\in \Sigma^n, n\in \mathbb{N}, I_n(d_1,\ldots,d_n)$ is the interval with endpoints 
    $$\sum_{i=1}^{n}\frac{1}{d_1(d_1-1)\ldots d_{i-1}(d_{i-1}-1)d_i}$$
    and
    $$\sum_{i=1}^{n}\frac{1}{d_1(d_1-1)\ldots d_{i-1}(d_{i-1}-1)d_i}+\frac{1}{d_1(d_1-1)\ldots d_{n}(d_{n}-1)}.$$
    Thus
    $$|I_n(d_1,\ldots,d_n)|=\frac{1}{d_1(d_1-1)\ldots d_{n}(d_{n}-1)},$$
    where $|I|$ denotes the length of the interval $I$.
\end{lemma}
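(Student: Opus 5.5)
The plan is to describe each branch of the L\"{u}roth map $T$ explicitly, invert it, and compose the inverse branches $n$ times. The key observation is that on the set where the first digit equals a fixed $d\in\Sigma$, the map $T$ is an increasing affine bijection onto $(0,1]$. A cylinder is then the image of $(0,1]$ under a composition of $n$ such inverse affine maps. That image is an interval, and its endpoints and length can be read off directly.

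\emph{Step 1 (one digit).} Fix $d\ge 2$. By definition $d_1(x)=d$ iff $\lfloor 1/x\rfloor=d-1$, i.e.\ iff $d-1\le 1/x<d$, i.e.\ iff $x\in(1/d,1/(d-1)]$. On this interval
\[
T(x)=d(d-1)\Bigl(x-\tfrac1d\Bigr),
\]
which is affine and increasing. As $x\downarrow 1/d$ we have $T(x)\to 0$, and $T(1/(d-1))=d(d-1)\bigl(\tfrac1{d-1}-\tfrac1d\bigr)=1$. So $T$ maps $(1/d,1/(d-1)]$ bijectively onto $(0,1]$, with inverse
\[
y\mapsto \frac1d+\frac{y}{d(d-1)}.
\]
Hence every $x\in(0,1]$ satisfies $x=\frac1{d_1(x)}+\frac{T(x)}{d_1(x)(d_1(x)-1)}$.

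\emph{Step 2 (induction on $n$).} Put $q_i=\prod_{j=1}^{i} d_j(d_j-1)$, with $q_0=1$. Iterating Step 1 with $d_{i}(x)=d_1(T^{i-1}x)$ gives
\[
x=\sum_{i=1}^{n}\frac{1}{q_{i-1}d_i(x)}+\frac{T^n(x)}{q_n},
\]
where the $q_i$ are formed from the digits of $x$. Moreover, by the bijectivity in Step 1, we have $x\in I_n(d_1,\ldots,d_n)$ iff $x=\psi_{d_1}\circ\cdots\circ\psi_{d_n}(y)$ for some $y\in(0,1]$, where $\psi_d(y)=\frac1d+\frac{y}{d(d-1)}$. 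The inductive step is simply that $I_{n}(d_1,\ldots,d_n)=\psi_{d_1}\bigl(I_{n-1}(d_2,\ldots,d_n)\bigr)$, which follows because $T$ restricted to the first-digit set is a bijection onto $(0,1]$. Consequently $I_n(d_1,\ldots,d_n)$ is the image of $(0,1]$ under the increasing affine map
\[
y\mapsto \sum_{i=1}^{n}\frac{1}{q_{i-1}d_i}+\frac{y}{q_n}.
\]

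\emph{Step 3 (conclusion).} This image is a half-open interval. Its left endpoint is $\sum_{i=1}^{n}\frac{1}{d_1(d_1-1)\cdots d_{i-1}(d_{i-1}-1)d_i}$, attained only in the limit $y\to 0$. Its right endpoint is that sum plus $1/q_n$. Its length is therefore $1/q_n=\frac{1}{d_1(d_1-1)\cdots d_n(d_n-1)}$, as claimed.

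There is no real obstacle in this argument. The only point requiring care is the boundary convention: $d_1(x)=\lfloor 1/x\rfloor+1$ puts the right endpoint $1/(d-1)$ inside the branch, and that is exactly what makes each branch surject onto $(0,1]$ rather than $[0,1)$.
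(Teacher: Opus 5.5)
Your proof is correct and complete. The branch computation $d_1(x)=d \iff x\in(1/d,1/(d-1)]$, the identity $T\circ\psi_d=\mathrm{id}$ on $(0,1]$, and the recursion $I_n(d_1,\ldots,d_n)=\psi_{d_1}(I_{n-1}(d_2,\ldots,d_n))$ together give $I_n(d_1,\ldots,d_n)=(A,\,A+1/q_n]$, where $A=\sum_{i=1}^n 1/(q_{i-1}d_i)$.

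The paper does not prove this lemma; it only quotes it from the literature, so there is no competing route to compare with. Your inverse-branch derivation is the standard argument behind the cited result.

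Your version buys two things beyond self-containment, both of which the paper later uses tacitly. The first is the exact half-open convention: each branch surjects onto $(0,1]$ because the right endpoint $1/(d-1)$ belongs to the branch. The second is the left-to-right ordering of sibling cylinders. From your affine formula, $I_n(d_1,\ldots,d_{n-1},d)=(c+1/(q_{n-1}d),\,c+1/(q_{n-1}(d-1))]$ for a constant $c$, so these intervals tile $(c,c+1/q_{n-1}]$ and move to the left as $d$ increases. That ordering is what justifies the placement of neighbouring intervals in the H\"older estimate (Proposition~\ref{lem:Holder}) and in the gap estimate (Proposition~\ref{lem:gap}).
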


The following lemma gives the Hausdorff dimension of the exceptional set when the expansion digits are bounded, whose calculation relies on a Moran-type formula.
\begin{lemma}(\cite{sun2018maximal})
    For any $M\in \Sigma$, let
    $$E_M=\{x\in (0,1]:2\le d_i(x)\le M~~\text{for all}~~i\ge 1\}.$$
    Then $\dim_{\mathrm{H}}E_M=s_M$, where $s_M$ is given by the Moran formula
    $$\sum_{k=2}^{M}\left(\frac{1}{k(k-1)}\right)^{s_M}=1,$$
    and $\lim\limits_{M\to\infty}s_M=1.$
\end{lemma}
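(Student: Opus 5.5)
The plan is to realize $E_M$, up to a countable set, as the attractor of a finite system of affine contractions satisfying the open set condition, and then to apply the Moran–Hutchinson dimension formula. For $2\le k\le M$ let
$$f_k(x)=\frac1k+\frac{x}{k(k-1)},\qquad x\in[0,1].$$
By \eqref{LE} and Lemma \ref{lemmalength}, $f_k$ maps $(0,1]$ onto the first-order cylinder $I_1(k)$. Moreover, if $x=[d_1,d_2,\ldots]$, then $f_k(x)=[k,d_1,d_2,\ldots]$. Hence $f_{d_1}\circ\cdots\circ f_{d_n}\bigl((0,1]\bigr)=I_n(d_1,\ldots,d_n)$, and its length is
$$\prod_{i=1}^n\frac{1}{d_i(d_i-1)}.$$
Each $f_k$ is a similarity with ratio $r_k=1/(k(k-1))<1$. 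The images $f_k\bigl((0,1)\bigr)$, $2\le k\le M$, are interiors of distinct cylinders of order one. They are therefore pairwise disjoint subsets of $(0,1)$, so the open set condition holds with $V=(0,1)$.

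Let $K_M$ be the attractor of $\{f_k\}_{k=2}^M$. By Hutchinson's theorem, $\dim_{\mathrm H}K_M=s_M$, where $s_M$ is the unique root of $\sum_{k=2}^M r_k^{s}=1$. Uniqueness holds because the left side is strictly decreasing in $s$, equals $M-1\ge1$ at $s=0$, and is less than $1$ at $s=1$ when $M<\infty$. Next compare $K_M$ with $E_M$. We have
$$K_M=\bigcap_{n}\ \bigcup_{2\le d_i\le M}\overline{I_n(d_1,\ldots,d_n)}.$$
It contains $E_M$, and it differs from $E_M$ only by points that are endpoints of some cylinder. Such points have a terminating or degenerate L\"uroth expansion, so there are only countably many of them. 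Removing a countable set does not change Hausdorff dimension (when $M=2$ both sets are single points and everything is trivial). Hence $\dim_{\mathrm H}E_M=s_M$.

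If one prefers to avoid quoting Hutchinson's theorem, I would argue directly. For the upper bound, cover $E_M$ by the cylinders of order $n$; then
$$\sum|I_n|^{s_M}=\Bigl(\sum_{k=2}^M r_k^{s_M}\Bigr)^n=1.$$
For the lower bound, put the Bernoulli measure with weights $r_k^{s_M}$ on the cylinders and apply the mass distribution principle. Any interval $J$ meets at most a bounded number of cylinders of comparable length, because the ratios $r_k$ lie in $[1/(M(M-1)),1/2]$. This gives $\mu(J)\ll|J|^{s_M}$. This comparison step is the only place needing some care, and it is routine since the digits are bounded.

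Finally, we show $s_M\to1$. The sequence $s_M$ is nondecreasing in $M$, since adding a positive term forces the root to increase. Telescoping gives
$$\sum_{k\ge2}\frac{1}{k(k-1)}=1,$$
so $\sum_{k=2}^M r_k^{1}<1$, which forces $s_M<1$. Let $s^*=\lim_M s_M\le1$ and suppose $s^*<1$. Since $s_M\le s^*$ and $r_k<1$, we get $r_k^{s^*}\le r_k^{s_M}$, so $\sum_{k=2}^M r_k^{s^*}\le1$ for every $M$. Letting $M\to\infty$ gives $\sum_{k\ge2}r_k^{s^*}\le1$. But $r_k^{s^*}>r_k$ for every $k$, so $\sum_{k\ge2}r_k^{s^*}>\sum_{k\ge2}r_k=1$, a contradiction. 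Hence $\lim_{M\to\infty}s_M=1$.
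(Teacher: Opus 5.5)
Your proof is correct and follows the route the paper indicates: the lemma is quoted from \cite{sun2018maximal} as an application of the Moran formula, which is what you obtain by realizing $E_M$ as the self-similar set of the inverse branches $f_k$, $2\le k\le M$, under the open set condition. In fact $K_M=E_M$ exactly, since the partial sums of \eqref{LE} increase strictly, so each coded point lies in the half-open cylinders $I_n$ of its own coding and the countable-set comparison is not needed. Your argument for $s_M\to1$ is the case $u=0$ of the contradiction argument in Lemma~\ref{continus}(3), with monotonicity in $M$ replacing the passage to subsequences.
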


Furthermore, if the digits are bounded only on a set of indices of zero density, the corresponding set can attain the full dimension.
\begin{lemma}(\cite{sun2018maximal})
    Given a set of positive integers $\mathcal{J}=\{j_1<j_2<\cdots\}$ and an infinite bounded sequence $\{d_i\}_{i\ge 1}$ with $2\le d_i\le M$ for some $M\in \Sigma$, let
    $$E(\mathcal{J},\{d_i\})=\{x=[d_1(x),d_2(x),\ldots]\in(0,1]:d_i(x)=d_i,\forall i\in \mathcal{J}\}.$$
    If the density of $\mathcal{J}$ is zero, that is,
    $$\lim_{n\to\infty}\frac{\#\{i\le n:i\in\mathcal{J}\}}{n}=0,$$
    then $\dim_{\mathrm{H}}E(\mathcal{J},\{d_i\})=1$, where $\#$ denotes the number of elements in a set.
\end{lemma}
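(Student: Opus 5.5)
The plan is to prove the lower bound $\dim_{\mathrm{H}}E(\mathcal{J},\{d_i\})\ge 1$ with the mass distribution principle; the upper bound is trivial since $E(\mathcal{J},\{d_i\})\subset(0,1]$. Fix $N\ge M$ and consider the Cantor-type subset
$$F_N=\{x\in(0,1]: d_i(x)=d_i \text{ for } i\in\mathcal{J},\ 2\le d_i(x)\le N \text{ for } i\notin\mathcal{J}\}\subset E(\mathcal{J},\{d_i\}).$$
I will show $\dim_{\mathrm{H}}F_N\ge s_N-\vare$ for every $\vare>0$. Letting $N\to\infty$ and using $s_N\to1$ from the lemma on $E_M$ then gives the claim.

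\textbf{Step 1: the measure.} Define a probability measure $\mu$ on $F_N$ through its values on cylinders. A position $i\notin\mathcal{J}$ receives digit $k\in\{2,\dots,N\}$ with weight $(k(k-1))^{-s_N}$; these weights sum to $1$ by the Moran formula. A position $i\in\mathcal{J}$ receives its prescribed digit $d_i$ with weight $1$. Extending by Kolmogorov's consistency theorem, for an admissible cylinder Lemma~\ref{lemmalength} gives
$$\mu(I_n(x))=\prod_{i\le n,\, i\notin\mathcal{J}}\bigl(d_i(x)(d_i(x)-1)\bigr)^{-s_N}=|I_n(x)|^{s_N}\prod_{i\le n,\, i\in\mathcal{J}}\bigl(d_i(d_i-1)\bigr)^{s_N}\le |I_n(x)|^{s_N}M^{2\,\#\{i\le n:\,i\in\mathcal{J}\}}.$$
Every digit is at least $2$, so $|I_n(x)|\le 2^{-n}$. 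Because $\mathcal{J}$ has density zero, the factor $M^{2\#\{i\le n:i\in\mathcal{J}\}}=e^{o(n)}$ is at most $|I_n(x)|^{-\vare}$ for all $n\ge n_0(\vare)$, uniformly in $x$. Hence $\mu(I_n(x))\le |I_n(x)|^{s_N-\vare}$ for all $n\ge n_0$.

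\textbf{Step 2: from cylinders to balls.} This is the step needing care, though it is standard here because all digits of points of $F_N$ are bounded by $N$. Consecutive cylinders then satisfy $|I_{n+1}(x)|\ge |I_n(x)|/(N(N-1))$. For a ball $B(y,r)$ with $r$ small, take for each $x\in F_N\cap B(y,r)$ the least $n$ with $|I_n(x)|\le r$. Then $|I_n(x)|> r/N^2$, and $n\ge n_0$ once $r$ is small. These stopped cylinders are pairwise disjoint intervals meeting $B(y,r)$, each of length exceeding $r/N^2$, so at most $2N^2+2$ of them occur. Therefore
$$\mu(B(y,r))\le (2N^2+2)\,r^{s_N-\vare},$$
and the mass distribution principle yields $\dim_{\mathrm{H}}F_N\ge s_N-\vare$.

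The main obstacle is the density bookkeeping in Step 1. The fixed digits on $\mathcal{J}$ shrink cylinders without contributing any mass, so one must check that their total effect is only subexponential relative to $|I_n|\le 2^{-n}$. The zero-density hypothesis is used precisely there, together with the boundedness $d_i\le M$.
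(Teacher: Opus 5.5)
Your proof is correct, but it takes a different route from the one the paper relies on. The paper gives no proof of this lemma; it is quoted from \cite{sun2018maximal}. For the analogous step in Section~4, the paper uses a deletion map. It strips the digits at the positions in $\mathcal{J}$, shows the resulting map is $\frac{1}{1+\varepsilon}$-H\"older (Proposition~\ref{lem:Holder}), and transfers a known dimension of the image via Lemma~\ref{lemma2.5}. Applied here, that argument would give $\dim_{\mathrm{H}}E(\mathcal{J},\{d_i\})\ge s_N/(1+\varepsilon)$ from $\dim_{\mathrm{H}}E_N=s_N$.

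Zero density enters that argument exactly as in your Step~1. The prescribed digits change cylinder lengths by a factor of at most $(M(M-1))^{\#\{i\le n:\,i\in\mathcal{J}\}}=e^{o(n)}$, which is negligible against $|I_n|\le 2^{-n}$. The H\"older estimate, however, also needs a separation bound: a cylinder carrying an excluded digit lies between any two distinct points. It then relies on the full dimension formula for $E_N$.

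You instead put the product measure directly on $F_N$. You pass from cylinders to balls by a stopping-time count that uses only $|I_{n+1}(x)|\ge|I_n(x)|/(N(N-1))$. So you need neither Lemma~\ref{lemma2.5} nor any gap estimate. From the $E_M$ lemma you use only $s_N\to 1$; in fact, your argument with $\mathcal{J}=\emptyset$ reproves $\dim_{\mathrm{H}}E_N\ge s_N$.

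The deletion route is more modular. It reduces the problem to any dimension result already available for the image set, which is why it suits Section~4, where the image $f(G(M))$ is itself a nontrivial set.

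One point you should state explicitly in Step~2 is why distinct stopped cylinders are disjoint. L\"uroth cylinders are nested or disjoint. If a strictly larger stopped cylinder contained $x$, then $I_{n}(x)$ would already have length at most $r$ at a smaller index, contradicting the minimality of the stopping index of $x$.
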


We now require some general tools for estimating the Hausdorff dimension.
\begin{lemma}(\cite{jg76})\label{lemma2.5}
    Let $E\subset \mathbb{R}$ and let $f:E\to\mathbb{R}$ be an $\eta$-H\"{o}lder function, i.e., for any $x,y\in E$,
    $$|f(x)-f(y)|\ll |x-y|^{\eta},$$
    where the constant in $\ll$ is an absolute constant. Then
    $$\dim_{\mathrm{H}}f(E)\le \frac{1}{\eta}\dim_{\mathrm{H}} E.$$
\end{lemma}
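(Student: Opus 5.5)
The plan is to prove Lemma~\ref{lemma2.5} directly from the definition of Hausdorff measure, by pushing covers of $E$ forward through $f$. Let $C>0$ be the absolute constant implicit in $\ll$, so that $|f(x)-f(y)|\le C|x-y|^{\eta}$ for all $x,y\in E$. The key observation is that diameters transform in a controlled way: for any set $U\subset\mathbb{R}$,
\[
|f(E\cap U)|\le C|U|^{\eta},
\]
where $|\cdot|$ denotes diameter. This follows immediately by taking the supremum over pairs $x,y\in E\cap U$.

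The first step is a comparison of Hausdorff measures: for every $s\ge 0$,
\[
\mathcal{H}^{s/\eta}(f(E))\le C^{s/\eta}\mathcal{H}^{s}(E).
\]
To prove it, fix $\delta>0$ and take any $\delta$-cover $\{U_i\}$ of $E$. The sets $\{f(E\cap U_i)\}$ then cover $f(E)$, and each has diameter at most $C\delta^{\eta}$. Moreover
\[
\sum_i |f(E\cap U_i)|^{s/\eta}\le C^{s/\eta}\sum_i|U_i|^{s}.
\]
Taking the infimum over all $\delta$-covers gives
\[
\mathcal{H}^{s/\eta}_{C\delta^\eta}(f(E))\le C^{s/\eta}\mathcal{H}^{s}_{\delta}(E).
\]
Since $C\delta^{\eta}\to 0$ as $\delta\to0$ (because $\eta>0$), letting $\delta\to 0$ yields the claimed inequality.

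The second step converts this into a dimension bound. Take any $s>\dim_{\mathrm H}E$. Then $\mathcal{H}^s(E)=0$, so the inequality gives $\mathcal{H}^{s/\eta}(f(E))=0$, and hence $\dim_{\mathrm H}f(E)\le s/\eta$. Letting $s\downarrow\dim_{\mathrm H}E$ gives $\dim_{\mathrm H}f(E)\le \frac1\eta\dim_{\mathrm H}E$.

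There is no real obstacle in this argument. The only points needing care are that the pushed-forward sets must be $f(E\cap U_i)$ rather than $f(U_i)$, since $f$ is only defined on $E$, and that the mesh $C\delta^{\eta}$ still tends to zero, which is where $\eta>0$ is used.
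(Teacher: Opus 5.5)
Your proof is correct: pushing a $\delta$-cover $\{U_i\}$ of $E$ forward to $\{f(E\cap U_i)\}$ gives $\mathcal{H}^{s/\eta}(f(E))\le C^{s/\eta}\mathcal{H}^{s}(E)$, and taking $s>\dim_{\mathrm H}E$ then yields the dimension bound. The paper gives no proof of this lemma and only cites it; your argument is the standard proof from the literature (the H\"older mapping estimate for Hausdorff measures and its corollary), so there is nothing to add.
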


For establishing lower bounds on the Hausdorff dimension, the following mass distribution principle is essential.
\begin{lemma}(\cite{kf90})
    Let $E$ be a Borel set and $\mu$ be a measure with $\mu(E)>0$. Suppose that for some $s>0$, there exist constants $C>0$, $r_0>0$ such that for any $x\in E$ and $r<r_0$,
    $$\mu(B(x,r))\le Cr^s,$$
    where $B(x,r)$ denotes an open ball centered at $x$ and radius $r$. Then $\dim_{\mathrm{H}}E\ge s$.
\end{lemma}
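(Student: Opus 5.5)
The plan is the standard covering argument: compare the $\mu$-mass of $E$ with the $s$-dimensional Hausdorff pre-measure of an arbitrary cover. The aim is to show $\mathcal{H}^s(E)\ge \mu(E)/(C2^s)>0$. Once this holds, the definition of Hausdorff dimension gives $\dim_{\mathrm{H}}E\ge s$ immediately. As usual, $\mu$ is a finite Borel measure, so $\mu(E)$ makes sense.

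First, fix $\delta<r_0/2$ and let $\{U_i\}$ be any countable cover of $E$ with $|U_i|\le\delta$ for every $i$. Discard every $U_i$ that does not meet $E$; the remaining sets still cover $E$. For each remaining $U_i$, choose a point $x_i\in U_i\cap E$.

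Next, split according to the diameter.
\begin{itemize}
\item If $|U_i|>0$, every $y\in U_i$ satisfies $|y-x_i|\le |U_i|<2|U_i|$. Hence $U_i\subset B(x_i,2|U_i|)$. Since $2|U_i|\le 2\delta<r_0$ and $x_i\in E$, the hypothesis gives $\mu(U_i)\le C2^s|U_i|^s$.
\item If $|U_i|=0$, then $U_i=\{x_i\}$. For every $r<r_0$ we have $\mu(\{x_i\})\le\mu(B(x_i,r))\le Cr^s$. Letting $r\to0$ and using $s>0$ gives $\mu(U_i)=0$, so the same bound holds trivially.
\end{itemize}

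By countable subadditivity,
\[
0<\mu(E)\le\sum_i\mu(U_i)\le C2^s\sum_i|U_i|^s .
\]
Take the infimum over all such covers, then let $\delta\to0$. This yields $\mathcal{H}^s(E)\ge \mu(E)/(C2^s)>0$. Therefore $\dim_{\mathrm{H}}E\ge s$, because $\mathcal{H}^t(E)=\infty$ for every $t<s$.

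There is no real obstacle here. The only delicate point is bookkeeping: the hypothesis controls only balls centred at points of $E$ and of radius less than $r_0$. This is why the proof discards covering sets that miss $E$, recentres each remaining set at a point of $E$, and takes $\delta<r_0/2$.
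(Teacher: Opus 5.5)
Your argument is correct, and it is essentially the standard proof behind the cited result: the paper gives no proof of its own and only quotes Falconer, where one bounds $\mu(E)\le\sum_i\mu(U_i)\le c\sum_i|U_i|^s$ for any $\delta$-cover, and your recentring at a point $x_i\in U_i\cap E$ (at the cost of the factor $2^s$, with $\delta<r_0/2$) is exactly what adapts this to a hypothesis on balls centred in $E$. One cosmetic point: if $\mu$ is a Borel measure rather than an outer measure, apply subadditivity to the Borel sets $B(x_i,2|U_i|)$ (together with the singletons), which also cover $E$, instead of to the possibly non-measurable $U_i$.
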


Fix $u>0$ and an integer $M\in \Sigma$, let $s(u)$ and $s_M(u)$ be the unique roots of the equations \eqref{su} and
\begin{equation}\label{suM}
    \sum_{t=2}^{M}\left(\frac{1}{2^ut(t-1)}\right)^s=1,
\end{equation}
respectively. The lemma below exhibits some basic properties shared by $s(u)$ and $s_M(u)$.

\begin{lemma}\label{continus}
    Let $M\in \Sigma$ and $u\ge0$, we have
    \begin{itemize}
        \item[(1)] $s(u)$ and $s_M(u)$ are both non-increasing and continuous with respect to $u$;
        \item[(2)]$s(0)=1$ and $\lim\limits_{u\to\infty}s(u)=0$;
        \item[(3)] the limit $\lim\limits_{M\to\infty}s_M(u)$ exists, and is equal to $s(u)$.
    \end{itemize}
\end{lemma}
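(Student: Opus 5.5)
Throughout, write
\[
f(u,s)=\sum_{t=2}^{\infty}\Bigl(\frac{1}{2^u t(t-1)}\Bigr)^s=2^{-us}\sum_{t=2}^{\infty}\bigl(t(t-1)\bigr)^{-s},\qquad
f_M(u,s)=2^{-us}\sum_{t=2}^{M}\bigl(t(t-1)\bigr)^{-s}.
\]
The plan is to work directly with these functions. Everything reduces to elementary monotonicity and continuity properties of $f$ and $f_M$ in the two variables. Part (2) is the exception: as stated it is in tension with the convergence range of $f$, as explained below.

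\textbf{Preliminaries on $f_M$ and $f$.} For fixed $u\ge0$, $f_M(u,\cdot)$ is a finite sum of strictly decreasing continuous exponentials. It satisfies $f_M(u,0)=M-1\ge 1$ and $f_M(u,s)\to0$ as $s\to\infty$, so the root $s_M(u)$ exists and is unique; for $M=2$ the root is $0$ when $u$ is such that $f_2(u,0)=1$. For the infinite sum, $f(u,s)<\infty$ exactly when $s>1/2$, since $t(t-1)\sim t^2$. Moreover $f(u,s)\uparrow\infty$ as $s\downarrow 1/2$, and $f(u,\cdot)$ is continuous and strictly decreasing on $(1/2,\infty)$ by dominated convergence. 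Hence $s(u)$ is the unique root in $(1/2,\infty)$.

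\textbf{Part (1).} Both $f$ and $f_M$ are non-increasing in $u$ for each fixed $s\ge0$, because of the factor $2^{-us}$. So if $u<u'$, then $f(u',s(u))\le 1$, and since $f(u',\cdot)$ is strictly decreasing this forces $s(u')\le s(u)$; the same argument works for $s_M$. For continuity I would use joint continuity and strict monotonicity in $s$. Fix $u_0$ and $\varepsilon>0$. Then $f(u_0,s(u_0)-\varepsilon)>1>f(u_0,s(u_0)+\varepsilon)$, where $\varepsilon$ is small enough that $s(u_0)-\varepsilon>1/2$. By continuity in $u$ at these two fixed $s$-values (the factor $2^{-us}$ is continuous), both strict inequalities persist for $u$ near $u_0$. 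This gives $|s(u)-s(u_0)|<\varepsilon$, and the identical argument applies to $s_M$.

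\textbf{Part (3).} Here $f_M\uparrow f$ pointwise in $M$, so $s_M(u)$ is non-decreasing in $M$ and bounded above by $s(u)$; hence $\lim_{M}s_M(u)=:s^*$ exists and $s^*\le s(u)$. Suppose $s^*<s(u)$, and pick $s'\in(\max\{s^*,1/2\},\,s(u))$. Then $f(u,s')>1$, so by monotone convergence $f_M(u,s')>1$ for all large $M$, which gives $s_M(u)>s'>s^*$, a contradiction.

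\textbf{Part (2).} The identity $s(0)=1$ follows from the telescoping sum $\sum_{t\ge2}\frac{1}{t(t-1)}=1$, together with uniqueness of the root.

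The limit as $u\to\infty$ is the step I expect to be the obstacle. The natural argument fixes a target $s_0$ and notes that $f(u,s_0)=2^{-us_0}f(0,s_0)\to0$ as $u\to\infty$. This yields $s(u)<s_0$ eventually, but only for $s_0$ at which $f(0,s_0)<\infty$, that is, $s_0>1/2$. So this route proves $\limsup_{u\to\infty}s(u)\le 1/2$. By contrast, for the truncated roots the same argument works for every $s_0>0$, so $s_M(u)\to0$ as $u\to\infty$ for each fixed $M$.

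Since $f(u,s)=\infty$ for every $s\le 1/2$ and every $u$, any root of \eqref{su} in the literal sense is at least $1/2$. The assertion $\lim_{u\to\infty}s(u)=0$ therefore cannot be established from this definition, and I do not see how to prove it as stated. To obtain the limit $0$, I would have to read $s(u)$ as $\lim_{M}s_M(u)$ in the sense of part (3), and then combine the finite-$M$ limit with the ordering of limits. This is not legitimate: part (3) shows that $\lim_M s_M(u)=s(u)\ge1/2$ for every $u$, so exchanging the limits in $M$ and $u$ would be invalid.

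The sound conclusion this plan delivers for (2) is therefore $s(0)=1$ and $s(u)\downarrow1/2$ as $u\to\infty$; the lower bound $1/2$ comes from the divergence threshold of $f$. The value $0$ holds only for the truncated roots $s_M(u)$.
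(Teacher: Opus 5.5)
Your proposal is correct, and on part (2) the gap is in the paper, not in your argument. The paper argues that for every fixed $s>0$ one has $F_u(s)=2^{-us}\sum_{t\ge2}(t(t-1))^{-s}\to0$ as $u\to\infty$, and concludes $s(u)\to0$. But for $0\le s\le 1/2$ the series diverges for every $u$, since already $(t(t-1))^{-1/2}\ge 1/t$. So $F_u\equiv+\infty$ on $[0,1/2]$; the paper's remark ``$F_u(0+)=+\infty$'' hides this. Every root of \eqref{su} therefore lies in $(1/2,\infty)$, and the correct statement is $s(u)\downarrow 1/2$, exactly as you derive. The limit $0$ holds only for each truncated root $s_M(u)$, and, as you point out, the limits in $M$ and in $u$ do not commute.

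For the parts that are true, your route is essentially the paper's. You get $s(0)=1$ by telescoping, monotonicity by comparing $F_u$ and $F_{u'}$ at a root, and (3) by contradiction at a point strictly between the limit and $s(u)$. There are two minor differences. You prove continuity by an elementary squeeze at $s(u_0)\pm\varepsilon$, where the paper cites the implicit function theorem. You also obtain existence of $\lim_M s_M(u)$ from monotonicity in $M$, where the paper uses a $\limsup$/subsequence argument. Both versions are sound, and yours keep every evaluation point above the abscissa of convergence $1/2$.

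Note that the false limit is used later in the paper: in the case $\beta=1$ and in the case $\alpha=\beta/(1+\beta)$ of the upper bound. With the corrected value, those covering arguments only give $\dim_{\mathrm{H}}\le 1/2$, not $0$.
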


\begin{proof}
Denote $a_t = 2^{-u}/[t(t-1)]$ for $t\ge 2$, and define
\[
F_u(s) = \sum_{t=2}^\infty a_t^{s}, \qquad
F_{M,u}(s) = \sum_{t=2}^M a_t^{s}.
\]
For fixed $u$, each $a_t^{s}$ is strictly decreasing in $s$, hence $F_u(s)$ and $F_{M,u}(s)$ are strictly decreasing in $s$. Since $F_u(0+)=+\infty$, $F_u(\infty)=0$, and similarly for $F_{M,u}$ (with $F_{M,u}(0)=M-1\ge1$), the equations $F_u(s)=1$ and $F_{M,u}(s)=1$ have unique positive solutions $s(u)$ and $s_M(u)$ respectively.

\begin{itemize}
    \item[(1)] For fixed $s$, $a_t$ is strictly decreasing in $u$, so $F_u(s)$ is strictly decreasing in $u$. If $u_1<u_2$, suppose $s(u_1)\le s(u_2)$. Then
    \[
    1 = F_{u_1}(s(u_1)) \ge F_{u_1}(s(u_2)) > F_{u_2}(s(u_2)) = 1,
    \]
    a contradiction. Hence $s(u_1) > s(u_2)$ and $s(u)$ is strictly decreasing in $u$. The same argument works for $s_M(u)$. Continuity follows from the implicit function theorem.
    \item[(2)] When $u=0$, $a_t = 1/[t(t-1)]$. Direct calculation shows $\sum_{t=2}^\infty a_t = 1$, so $s(0)=1$. For $s<1$, $F_0(s) > 1$; for $s>1$, $F_0(s) < 1$; hence $s(0)=1$ uniquely.
    As $u\to\infty$, for any fixed $s>0$, $a_t^{s}$ tends to $ 0$, so $F_u(s)\to 0$. Thus to satisfy $F_u(s)=1$, $s(u)$ must tend to $0$.
    \item[(3)] For each fixed $u$, $F_{M,u}(s)$ tends to $F_u(s)$ as $M\to\infty$, monotonically in $M$. Let $s^* = \limsup\limits_{M\to\infty} s_M(u)$. If $s^* > s(u)$, take a subsequence $s_{M_k}\to s^*$, for $k$ large enough, $s_{M_k} > s(u)+\varepsilon$, then
    \[
   1 = F_{M_k,u}(s_{M_k}) < F_{M_k,u}(s(u)+\varepsilon) \le F_u(s(u)+\varepsilon) < F_u(s(u)) = 1,
   \]
   a contradiction. If $s^* < s(u)$, take subsequence $s_{M_k}\to s^* < s(u)-\varepsilon$, then
   \[
   1 = F_{M_k,u}(s_{M_k}) > F_{M_k,u}(s(u)-\varepsilon) \to F_u(s(u)-\varepsilon) > 1,
   \]
  also a contradiction. Hence $\lim_{M\to\infty} s_M(u) = s(u)$.
  \end{itemize}
\end{proof}

The following elementary estimate will be used repeatedly in the calculations of the Hausdorff measure.
\begin{lemma}\label{prop:main-sum-estimate}
Let $s > 0$ and $L \ge 1$ be an integer. Then there exists a constant $C > 0$ (independent of $L$) such that
\[
\sum_{i=2}^{\infty} \left( \frac{1}{i(i-1)} \right)^{sL} \le C \left( \frac12 \right)^{sL}.
\]
In particular, if $s = s(u)+\varepsilon$ for some $u \ge 0$ and $\varepsilon > 0$, where $s(u)$ is defined by \eqref{su}, then the constant $C$ can be chosen to depend only on $\varepsilon$ (and not on $u$ or $L$).
\end{lemma}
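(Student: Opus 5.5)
The plan is to factor out the dominant term $i=2$ and bound the remaining tail by a convergent series that does not depend on $L$. Write
\[
\sum_{i=2}^{\infty}\left(\frac{1}{i(i-1)}\right)^{sL}=\left(\frac12\right)^{sL}\Bigl[\,1+\sum_{i=3}^{\infty}\Bigl(\frac{2}{i(i-1)}\Bigr)^{sL}\Bigr].
\]
For $i\ge 3$ the base satisfies $2/(i(i-1))\le 1/3<1$. Since $L\ge 1$ gives $sL\ge s$, each term obeys $\bigl(2/(i(i-1))\bigr)^{sL}\le \bigl(2/(i(i-1))\bigr)^{s}$. Hence the bracket is at most
\[
C(s):=1+\sum_{i=3}^{\infty}\Bigl(\frac{2}{i(i-1)}\Bigr)^{s},
\]
which does not involve $L$. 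This constant is finite exactly when $2s>1$, since the terms behave like $i^{-2s}$.

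The implicit range of $s$ must be addressed. For $s\le 1/2$ and $L=1$ the left-hand side already diverges. So the first assertion is to be read for $s>1/2$, which is the only range in which it is used. In that range $C=C(s)$ works, and $C(s)$ is non-increasing in $s$ because every base is less than $1$.

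The key point for the ``in particular'' statement is that $s(u)>1/2$ for every $u\ge 0$. At $s=1/2$ we have
\[
F_u(1/2)=2^{-u/2}\sum_{t\ge2}\frac{1}{\sqrt{t(t-1)}}=\infty,
\]
because the terms are comparable to $1/t$. The same divergence holds for every $s\le 1/2$. Since $F_u$ is strictly decreasing where finite and $F_u(s(u))=1$, the root must satisfy $s(u)>1/2$. Therefore $s=s(u)+\varepsilon>1/2+\varepsilon$, and by monotonicity of $C(\cdot)$,
\[
C(s)\le C(1/2+\varepsilon)=1+\sum_{i\ge3}\Bigl(\frac{2}{i(i-1)}\Bigr)^{1/2+\varepsilon}<\infty.
\]
This bound depends only on $\varepsilon$, and not on $u$ or $L$.

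The only delicate step is recognizing that uniformity in $u$ comes from the lower bound $s(u)>1/2$ and not from any compactness in $u$. Without it one might worry that $s(u)\to 0$ as $u\to\infty$ (Lemma \ref{continus}(2)) pushes $s$ below the convergence threshold $1/2$. In fact $s(u)$ decreases to the limit $1/2$, not $0$: for each fixed $s>1/2$, $F_u(s)\to 0$ as $u\to\infty$, while $F_u(s)=\infty$ for $s\le 1/2$. The rest is routine.
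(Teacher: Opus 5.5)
Your proof is correct, and you are right that the first assertion is false as stated for $s\le 1/2$ and must be restricted to $s>1/2$. You split off the $i=2$ term as the paper does, but you handle the tail and the uniformity differently, and your version closes a genuine gap in the paper's argument.

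The paper bounds $\frac{1}{i(i-1)}\le\frac{2}{i^2}$ for $i\ge 3$ and compares $\sum_{i\ge3}i^{-2sL}$ with $\int_2^\infty x^{-2sL}\,dx$. This gives the explicit factor $1+\frac{2}{2sL-1}$ whenever $2sL>1$. The remaining small $L$ are then dismissed as ``trivially bounded'', and the uniform statement uses only $sL\ge\varepsilon L$. That dismissal fails when $sL\le 1/2$, because the series diverges. The paper never establishes $s(u)>1/2$; its Lemma~\ref{continus}(2) even asserts $s(u)\to 0$.

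Your monotonicity-in-the-exponent step ($sL\ge s$, with tail bases at most $1/3$) reduces every $L$ to the case $L=1$ with no case split. Your observation that $F_u(s)=\infty$ for $s\le 1/2$ gives $s(u)>1/2$, hence $C(s(u)+\varepsilon)\le C(1/2+\varepsilon)$. That is exactly the fact that makes the uniform claim true.

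What the paper's integral comparison buys is an explicit, asymptotically sharp factor. It tends to $1$ as $L\to\infty$, and it stays below $1+1/\varepsilon$ for all $L\ge 1$ once $s>1/2+\varepsilon$ is known. But it becomes a proof only after your fact is supplied.

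Your remark that $s(u)$ decreases to $1/2$ rather than $0$ is also correct. It directly contradicts Lemma~\ref{continus}(2). The paper's ``dimension $0$'' conclusions for $\beta=1$ and for $\alpha=\frac{\beta}{1+\beta}$ both rely on that lemma.
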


\begin{proof}
Denote $a_i = \frac{1}{i(i-1)}$ for $i \ge 2$.  
Observe that $a_2 = \frac12$, and for $i \ge 3$ we have $i(i-1) \ge \frac12 i^2$, hence, for $i\ge 3$,
$a_i \le \frac{2}{i^2}.$
Split the sum as 
\[
\sum_{i=2}^\infty a_i^{sL} = a_2^{sL} + \sum_{i=3}^\infty a_i^{sL}.
\]
The first term equals $(1/2)^{sL}$.  
For the tail, using the inequality
\[
\sum_{i=3}^\infty a_i^{sL} \le \sum_{i=3}^\infty \left(\frac{2}{i^2}\right)^{sL}
= 2^{sL} \sum_{i=3}^\infty i^{-2sL}.
\]

Since $s>0$ and $L\ge 1$, we have $2sL > 1$ for sufficiently large $L$. For the finitely many smaller $L$, the sum is trivially bounded by a constant depending only on $s$.  Assume now $2sL > 1$.  Then
\[
\sum_{i=3}^\infty i^{-2sL} \le \int_2^\infty x^{-2sL}\, dx 
= \frac{2^{1-2sL}}{2sL-1}.
\]
Consequently,
\[
\sum_{i=3}^\infty a_i^{sL} \le 2^{sL}\cdot\frac{2^{1-2sL}}{2sL-1}
= \frac{2^{1-sL}}{2sL-1}.
\]
Combining the two parts,
\[
\sum_{i=2}^\infty a_i^{sL} \le \left(\frac12\right)^{sL} + \frac{2^{1-sL}}{2sL-1}
= \left(\frac12\right)^{sL}\Bigl[\,1 + \frac{2}{2sL-1}\,\Bigr].
\]

If $s = s(u) + \varepsilon$ with $\varepsilon > 0$, then $sL \ge \varepsilon L$.  
Choose $L_0$ such that $2\varepsilon L_0 > 1$.  For all $L \ge L_0$, we have
\[
1 + \frac{2}{2sL-1} \le 1 + \frac{2}{2\varepsilon L_0 - 1}.
\]
For $L < L_0$ (only finitely many values), the left–hand side of the original inequality is bounded by a constant depending only on $\varepsilon$ (and on the fixed $L_0$).  
Thus there exists a constant $C = C(\varepsilon)$ such that
\[
\sum_{i=2}^\infty a_i^{sL} \le C \left(\frac12\right)^{sL}
\qquad \text{for all } L \ge 1.
\]
This completes the proof.
\end{proof}

\section{Proof of Theorem \ref{mainthm}: Upper Bound}\label{sec:upper-bound}

We discuss case by case according to the range of the parameters $(\alpha, \beta)$, where $0 \le \alpha \le \beta \le 1$.

\subsection{\bf{Case 1: }$\beta = 0$}\label{case:beta0}

Since $\beta = 0$, the condition implies $\alpha = 0$. By Theorem \ref{lesun}, we note that $\lim_{n\to\infty}\ell_n(x)/\log_2 n =1$ for almost all $x\in (0,1]$, hence, $\mathcal{L}(E(0,0))=1$. Therefore, 
$$\dim_{\mathrm{H}}E(0,0)=1.$$

\subsection{\bf{Case 2:} $\beta = 1$}
\label{casebeta1}

Here $0 \le \alpha \le 1$. By the definition of $E(\al,1)$, we have
$$\limsup_{n\to\infty}\frac{\ell_n(x)}{n}=1.$$
Then, for any $0<\rho<1$ there exist infinity many $n$ such that $\ell_n(x)/n>\rho$. Therefore
\begin{align*}
    E(\al,1)&\subset\{x\in(0,1]:\ell_n(x)>\rho n~~\text{for infinity many}~~n\}\\ &\subset\bigcap_{N=1}^{\infty}\bigcup_{n=N}^{\infty}\bigcup_{i=2}^{\infty}\bigcup_{j=0}^{n-\lfloor\rho n\rfloor}\{x\in(0,1]:d_{j+1}(x)=\cdots=d_{j+\lfloor \rho n\rfloor}(x)=i\}\\ &\subset\bigcap_{N=1}^{\infty}\bigcup_{n=N}^{\infty}\bigcup_{i=2}^{\infty}\bigcup_{j=0}^{n-\lfloor\rho n\rfloor}\bigcup_{d_1,\ldots,d_j\in \Sigma\atop d_{j+\lfloor\rho n\rfloor+1},\ldots,d_n\in\Sigma}I_n(d_1,\ldots,d_j,i,\ldots,i,d_{j+\lfloor\rho n\rfloor+1},\ldots,d_n).
\end{align*}

For any $\vare>0$, there exists positive $N_0$ such that for all $n\ge N_0$, we have 
\begin{equation}\label{rho1}
   \frac{\lfloor\rho n\rfloor}{n-\lfloor\rho n\rfloor}>\frac{\rho}{1-\rho}-\vare. 
\end{equation}
Write $s=s(\frac{\rho}{1-\rho})+\vare$,
then the $s$-dimensional Hausdorff measure of $E(\al,1)$ can be exhibited as follows. Let $\mathcal{H}^s$ denote the $s$-dimensional Hausdorff measure.
\begin{align*}
    &\mathcal{H}^s(E(\al,1))\\
    &\le \liminf_{N\to\infty}\sum_{n=N}^{\infty}\sum_{i=2}^{\infty}\sum_{j=0}^{n-\lrnr}\sum_{d_1,\ldots,d_j\in \Sigma\atop d_{j+\lfloor\rho n\rfloor+1},\ldots,d_n\in\Sigma}|I_n(d_1,\ldots,d_j,i,\ldots,i,d_{j+\lfloor\rho n\rfloor+1},\ldots,d_n)|^s\\
    &\le\liminf_{N\to\infty}\sum_{n=N}^{\infty}\sum_{i=2}^{\infty}n\left(\frac{1}{i(i-1)}\right)^{s\lrnr}\left[\sum_{t=2}^{\infty}\left(\frac{1}{t(t-1)}\right)^s\right]^{n-\lrnr}\\
    & \le\liminf_{N\to\infty}\sum_{n=N}^{\infty}\sum_{i=2}^{\infty}Cn\left(\frac{1}{2}\right)^{n\vare}\left(\frac{1}{i(i-1)}\right)^{s(\frac{\rho}{1-\rho})\lrnr}\left[\sum_{t=2}^{\infty}\left(\frac{1}{t(t-1)}\right)^{s(\frac{\rho}{1-\rho})}\right]^{n-\lrnr}\\
    &
    \le\liminf_{N\to\infty}\sum_{n=N}^{\infty}Cn\left(\frac{1}{2}\right)^{n\vare}\left(\frac{1}{2}\right)^{s(\frac{\rho}{1-\rho})\lrnr}\left[\sum_{t=2}^{\infty}\left(\frac{1}{t(t-1)}\right)^{s(\frac{\rho}{1-\rho})}\right]^{n-\lrnr}\\
    &
    \le\liminf_{N\to\infty}\sum_{n=N}^{\infty}Cn\left(\frac{1}{2}\right)^{\vare n+s(\frac{\rho}{1-\rho})(\lrnr-\frac{\rho}{1-\rho}(n-\lrnr))}\left[\left(\frac{1}{2}\right)^{\frac{\rho}{1-\rho}s(\frac{\rho}{1-\rho})}\sum_{t=2}^{\infty}\left(\frac{1}{t(t-1)}\right)^{s(\frac{\rho}{1-\rho})}\right]^{n-\lrnr}\\
    &\le\liminf_{N\to\infty}\sum_{n=N}^{\infty}Cn \left(\frac{1}{2}\right)^{\vare(n-s(\frac{\rho}{1-\rho})(n-\lrnr))}<+\infty,
\end{align*}
where the third inequality comes true because of the Lemma \ref{prop:main-sum-estimate} and the penultimate inequality comes from \eqref{rho1} and \eqref{su}.
Since $\vare$ is arbitrary, we have $\dim_{\mathrm{H}}E(\al,1)\le s\left(\frac{\rho}{1-\rho}\right)$. Letting $\rho\to\be=1$, we obtain
$$\dim_{\mathrm{H}}E(\al,1)=0.$$

\subsection{\bf{Case 3:} $0 < \beta < 1$}
\label{case:beta-intermediate}

This is the most involved situation. We refine the discussion according to the value of $\alpha$.

\subsubsection{\bf{Case 3.1:} $\alpha = 0$}
\label{case:beta<1-alpha0}

The upper bound estimation on $\dim_{\mathrm{H}}E(0,\be)$ can be established in much the same way as Subsection \ref{casebeta1}. Thus, in this case
$$\dim_{\mathrm{H}}E(0,\be)\le s\left(\frac{\be}{1-\be}\right).$$

\subsubsection{\bf{Case 3.2:} $0 < \alpha \le \beta<1$}
\label{case:beta<1-alpha>0}

Write $$P=\{x\in(0,1]:\text{the L\"{u}roth expansion of $x$ is periodic}\}.$$
The set $P$ can be expressed as
$$P=\bigcup_{k=1}^{\infty}\bigcup_{d_1=2}^{\infty}\cdots\bigcup_{d_j=2}^{\infty}\bigcup_{(d_{j+1},\ldots,d_{j+k})\in\Sigma^k}\{[d_1,\ldots,d_j,\overline{d_{j+1},\ldots,d_{j+k}}]\}$$
and thus it is countable.

Fix $x\in E(\al,\be)\cap ((0,1]\backslash P)$. We define two sequences of positive integers $\{n'_k\}_{k\ge 1}$ and $\{m'_k\}_{k\ge 1}$ as follows.
\begin{align*}
    n'_0&=0,\qquad m'_0=0,\\
    n'_k&=\min\{n\ge n'_{k-1}+m'_{k-1}:d_{n+1}(x)=d_{n+2}(x)\},\\
    m'_k&=\max\{j\ge 2:d_{n'_k+1}(x)=\ldots=d_{n'_k+j}(x)\}.
\end{align*}
Since $\limsup\limits_{n\to\infty}\frac{\ell_n(x)}{n}=\be>0$, there exist infinitely many $n$ such that $\ell_n(x)>\be n$, this implies the existence of run-length at least $2$ starting at some position. Thus the sequences $\{n'_k\}$ and $\{m'_k\}$ are well-defined and infinite. Moreover, $\beta>0$ implies $\lim\limits_{k\to\infty}m'_k=+\infty$, so we can extract a strictly increasing subsequence of $\{m'_k\}$ as follows. Let
$$j_1=1,\qquad j_{k+1}=\min\{j>j_k:m'_j>m'_{j_k}\},$$
and define $m_k=m'_{j_k}$, $n_k=n'_{j_k}$. By construction, $\{m_k\}$ is strictly increasing to infinity and 
\begin{equation}\label{mk}
    2\le m_k\le n_{k+1}-n_k.
\end{equation}

Assume that $n_k+m_k\le n<n_{k+1}+m_{k+1}$ for some $k$. Then the maximal run-length within the first $n$ digits is:
\begin{equation*}
    \ell_n(x)=
    \begin{cases}
         m_k,\ \ \ &\text{if}\ \ n_k+m_k\le n\le n_{k+1}+m_k,\cr
         n-n_{k+1},\ \ \ &\text{if}\ \ n_{k+1}+m_k< n<n_{k+1}+m_{k+1}.
    \end{cases}
\end{equation*}
From this, we deduce that
\begin{align}
    \al&=\liminf_{n\to\infty}\frac{\ell_n(x)}{n}=\liminf_{k\to\infty}\frac{m_k}{n_{k+1}+m_k},\label{al1}\\ 
    \be&=\limsup_{n\to\infty}\frac{\ell_n(x)}{n}=\limsup_{k\to\infty}\frac{m_k}{n_{k}+m_k}.
    \label{be1}
\end{align}
From \eqref{be1} we also obtain
\begin{equation}\label{be2}
    \limsup_{k\to\infty}\frac{\ell_{n_k+2m_k}(x)}{n_k+2m_k}=\limsup_{k\to\infty}\frac{m_k}{n_k+2m_k}=\frac{\be}{1+\be},
\end{equation}
since $m_k/(n_k+m_k) \to \beta$ along a subsequence implies $m_k/n_k \to \beta/(1-\beta)$.

By \eqref{mk}, \eqref{al1} and \eqref{be2}, we have
$$\al\le \frac{m_k}{n_{k+1}+m_k}\le \frac{m_k}{n_k+2m_k}\le \frac{\be}{1+\be}.$$
Therefore, for any $x\in E(\al,\be)\cap((0,1]\backslash P)$, we must have $\al\le \frac{\be}{1+\be}$.

To obtain a sharp upper bound, we compare $\alpha$ with $\dfrac{\beta}{1+\beta}$.

\begin{description}
    \item[Case 3.2 (1): $0 < \dfrac{\beta}{1+\beta}<\alpha \le\be<1$.] 
    If $\al>\frac{\be}{1+\be}$, then $E(\al,\be)\subset P$. Therefore, in this case, $E(\al,\be)$ is at most countable and 
    $$\dim_{\mathrm{H}}E(\al,\be)=0.$$
    
    \item[Case 3.2 (2): $0< \alpha\le\dfrac{\beta}{1+\beta}  < \beta<1$.] 

    From \eqref{al1} and \eqref{be1} we derive that
\begin{align}
    &\limsup_{k\to\infty}\frac{n_{k+1}}{m_k}=\frac{1-\al}{\al}, \label{al2}\\
    &\limsup_{k\to\infty}\frac{m_{k+1}}{n_{k+1}}=\limsup_{k\to\infty}\frac{m_k}{n_k}=\frac{\be}{1-\be}, \label{be3}\\
    &\limsup_{k\to\infty}\frac{m_{k+1}}{m_k}= \limsup_{k\to\infty}\frac{m_{k+1}}{n_{k+1}}\cdot\frac{n_{k+1}}{m_k}\le\frac{\be(1-\al)}{\al(1-\be)}.\label{albe1}
\end{align}
Now, from \eqref{al2}, \eqref{be3} and \eqref{albe1}, one can estimate
\begin{align}
    \limsup_{k\to\infty}\frac{\sum_{j=1}^k m_j}{n_k+m_k}
    &\ge \liminf_{k\to\infty}\frac{\sum_{j=1}^{k-1} m_j}{n_k+m_k}+\limsup_{k\to\infty}\frac{m_k}{n_k+m_k}\nonumber\\
    &\ge \liminf_{k\to\infty}\frac{m_k}{n_{k+1}+m_{k+1}-(n_k+m_k)}+\limsup_{k\to\infty}\frac{m_k}{n_k+m_k}\nonumber\\
    &\ge\frac{\be^2(1-\al)}{\be-\al}.\label{albe2}
\end{align}
At the same time, we can deduce that, the sequence $\{n_k+m_k\}_{k\ge 1}$ grows at least exponentially, that is, there exists a constant $C_1>0$ such that 
$$\log (n_k+m_k)\ge k C_1,$$
for all sufficiently large $k$.
\\ \hspace*{1em} 
Write $\eta=\frac{\be^2(1-\al)}{\be-\al}$. Then by \eqref{albe2}, we have 
$$\limsup_{k\to\infty}\frac{\sum_{j=1}^km_j}{n_k+m_k-\sum_{j=1}^km_j}\ge \frac{\eta}{1-\eta}.$$

Therefore,
\begin{itemize}
    \item If $0<\al<\frac{\be}{1+\be}<\be<1$, then for any $\vare>0$, there exist infinitely many $k$ such that 
    \begin{equation}\label{mk1}
        \sum_{j=1}^km_j\ge\left(\frac{\eta}{1-\eta}-\vare\right)\left(n_k+m_k-\sum_{j=1}^km_j\right).
    \end{equation}
    \item If $0<\al=\frac{\be}{1+\be}<\be<1$, then for any $M>0$ large enough, there exist infinitely many $k$ such that
    \begin{equation}\label{mk2}
        \sum_{j=1}^km_j\ge M\left(n_k+m_k-\sum_{j=1}^km_j\right).
    \end{equation} 
\end{itemize}
Recall that the construction of $\{n_k\}_{k\ge 1}$ and $\{m_k\}_{k\ge 1}$ depends on $x\in E(\al,\be)$.
\\ \hspace*{1em} 
When $0<\al<\frac{\be}{1+\be}<\be<1$, we define
$$\mathcal{A}=\{(\{n_k\},\{m_k\}): \text{the relations \eqref{mk}-\eqref{be1} hold}\},$$
which is the collection of all sequence pairs that can arise from points in $E(\al,\be)$. For $(\{n_k\},\{m_k\})\in \mathcal{A}$ and a sequence $\{i_k\}_{k\ge 1}$ with $i_k\in \Sigma$, set
\begin{align*}
    &\mathcal{W}_k=\{x\in(0,1]:d_{n_k+1}(x)=\cdots=d_{n_k+m_k}(x)~\text{for all}~k\},\\
    &\mathcal{C}_k=\left\{((n_j,m_j)_{j=1}^{k-1},n_k):
    \begin{array}{l}
        2\le m_j<n_{j+1}-n_j~\text{for }1\le j<k,\ m_k\ge 2,\\
        \text{and condition \eqref{mk1} holds}
    \end{array}
    \right\},\\
    &\mathcal{O}_k(\{i_j\}_{j=1}^k)=\{(\tau_1,\ldots,\tau_{n_k+m_k})\in \Sigma^{{n_k+m_k}}: \tau_{n_j+1}=\cdots=\tau_{n_j+m_j}=i_j \text{ for }1\le j\le k\}.
\end{align*}
Then we have the covering
\begin{equation}\label{Ecover}
    E(\al,\be)\subset \bigcup_{(\{n_k\},\{m_k\})\in \mathcal{A}}\mathcal{W}_k\subset \bigcap_{m=1}^{\infty}\bigcup_{k=m}^{\infty} \Gamma_k,
\end{equation}
where 
$$\Gamma_k=\!\!\!\bigcup_{n_k+m_k\ge e^{kC_1}}^{\infty}\ 
\bigcup_{((n_j,m_j)_{j=1}^{k-1},n_k)\in\mathcal{C}_k}\ 
\bigcup_{\substack{i_1,\ldots,i_k\in\Sigma \\ (d_1,\ldots,d_{n_k+m_k})\in\mathcal{O}_k(\{i_j\}_{j=1}^k)}}\!\!\! I_{n_k+m_k}(d_1,\ldots,d_{n_k+m_k}).$$
\\ \hspace*{1em} 
Fix $\vare>0$, and set $\tilde{s} = s\!\left(\frac{\eta}{1-\eta}-\vare\right)$. 
For sufficiently large $k$, we have
\begin{equation}\label{estimate2}
    k^{\frac{2}{C_1}\log k} \le 2^{k\vare}.
\end{equation}

Now, let us estimate the $(\tilde{s}+2\vare)$-dimensional Hausdorff content of $\Gamma_k$:
\begin{align}\label{cover1}
    v_k(\tilde{s}&+2\vare):=\\
    &\nonumber\!\!\!\sum_{n_k+m_k\ge e^{kC_1}}^{\infty}\ 
\sum_{((n_j,m_j)_{j=1}^{k-1},n_k)\in\mathcal{C}_k}\sum_{\substack{i_1,\ldots,i_k\in\Sigma \\ (d_1,\ldots,d_{n_k+m_k})\in\mathcal{O}_k(\{i_j\}_{j=1}^k)}}\!\!\! |I_{n_k+m_k}(d_1,\ldots,d_{n_k+m_k})|^{\tilde{s}+2\vare},
\end{align}
for all large $k$.
\\ \hspace*{1em} 
The innermost sum can be bounded as follows:
\begin{align}
    &\sum_{i_1,\ldots,i_k\in\Sigma \atop (d_1,\ldots,d_{n_k+m_k})\in\mathcal{O}_k(\{i_j\}_{j=1}^k)}\nonumber|I_{n_k+m_k}(d_1,\ldots,d_{n_k+m_k})|^{\tilde{s}+2\vare}\\
    &\nonumber
    \le\sum_{i_1,\ldots,i_k\in\Sigma }\sum_{ (d_1,\ldots,d_{n_k+m_k})\in\mathcal{O}_k(\{i_j\}_{j=1}^k)}\left(\frac{1}{d_1(d_1-1)\cdots d_{n_k+m_k}(d_{n_k+m_k}-1)}\right)^{\tilde{s}+2\vare}\\
    &\nonumber
    \le\left(\frac{1}{2}\right)^{2(n_k+m_k)\vare}\sum_{i_1,\ldots,i_k\in\Sigma }\sum_{ (d_1,\ldots,d_{n_k+m_k})\in\mathcal{O}_k(\{i_j\}_{j=1}^k)}\left(\frac{1}{d_1(d_1-1)\cdots d_{n_k+m_k}(d_{n_k+m_k}-1)}\right)^{\tilde{s}}\\
    &\nonumber
    =\left(\frac{1}{2}\right)^{2(n_k+m_k)\vare}\left[\sum_{t=2}^{\infty}\left(\frac{1}{t(t-1)}\right)^{\tilde{s}}\right]^{n_k+m_k-\sum_{j=1}^km_j}\prod_{j=1}^k\left[\sum_{i=2}^{\infty}\left(\frac{1}{i(i-1)}\right)^{m_j\tilde{s}}\right]\\
    &\nonumber
    \le\left(\frac{1}{2}\right)^{2(n_k+m_k)\vare}C^k\left(\frac{1}{2}\right)^{\tilde{s}\sum_{j=1}^km_j}\left[\sum_{t=2}^{\infty}\left(\frac{1}{t(t-1)}\right)^{\tilde{s}}\right]^{n_k+m_k-\sum_{j=1}^km_j} \\
    &\nonumber
    \le\left(\frac{1}{2}\right)^{2(n_k+m_k)\vare}C^k\left[\left(\frac{1}{2}\right)^{(\frac{\eta}{1-\eta}-\vare)\tilde{s}}\sum_{t=2}^{\infty}\left(\frac{1}{t(t-1)}\right)^{\tilde{s}}\right]^{n_k+m_k-\sum_{j=1}^km_j}\\
    &=\left(\frac{1}{2}\right)^{2(n_k+m_k)\vare}C^k,\label{etimate3}
\end{align}
where the penultimate and last inequalities are respectively from the Lemma \ref{prop:main-sum-estimate} and \eqref{mk1}.
Next, we bound the cardinality of $\mathcal{C}_k$. For each $k$, the number of integer pairs $(n_j,m_j)_{j=1}^{k-1}$ and $n_k$ satisfying the conditions in $\mathcal{C}_k$ is at most $(n_k+m_k)^{2k}$, because there are at most $2k$ integers each bounded by $n_k+m_k$. Using \eqref{estimate2}, we have for large $k$
\begin{equation}\label{estimate4}
    (n_k+m_k)^{2k}\le (n_k+m_k)^{\frac{2}{C_1}\log(n_k+m_k)}\le2^{(n_k+m_k)\vare}.
\end{equation}
\\ \hspace*{1em} 
Now combine \eqref{Ecover}, \eqref{cover1},  \eqref{etimate3} and \eqref{estimate4}. The $(\tilde{s}+2\vare)$-dimensional Hausdorff measure of $E(\al,\be)$ satisfies
\begin{align*}
    \mathcal{H}^{\tilde{s}+2\vare}(E(\al,\be))&
    \le \liminf_{m\to\infty}\sum_{k=m}^{\infty}\sum_{n_k+m_k\ge e^{C_1k}}^{\infty}(n_k+m_k)^{2k}\,C^k\left(\frac{1}{2}\right)^{2(n_k+m_k)\vare}\\
    &\le \liminf_{m\to\infty}\sum_{k=m}^{\infty}\sum_{n_k+m_k\ge e^{C_1k}}^{\infty}C^k\left(\frac{1}{2}\right)^{(n_k+m_k)\vare}\\
    &\le\frac{C^k}{1-(\frac{1}{2})^{\vare}}\liminf_{m\to\infty}\sum_{k=m}^{\infty}\left(\frac{1}{2}\right)^{\vare e^{C_1k}}<+\infty.
\end{align*}
Thus, for $0<\al<\frac{\be}{1+\be}<\be<1$, we have
$$\dim_{\mathrm{H}}E(\al,\be)\le \tilde{s}+2\vare = s\left(\frac{\eta}{1-\eta}-\vare\right)+2\vare.$$
Since $\vare>0$ is arbitrary and $s(u)$ is continuous, letting $\vare\to 0$ yields
$$\dim_{\mathrm{H}}E(\al,\be)\le s\left(\frac{\eta}{1-\eta}\right)=s\left(\frac{\be^2(1-\al)}{(1-\be)[\be-\al(1+\be)]}\right).$$
\\ \hspace*{1em} 
For $0<\al=\frac{\be}{1+\be}<\be<1$, we keep the definitions of $\mathcal{A}$, $\mathcal{W}_k, \mathcal{O}_k(\{i_j\}_{j=1}^k)$ unchanged and only modify $\mathcal{C}_k$ by replacing \eqref{mk1} with \eqref{mk2}. Set $\tilde{s}=s(M)$. As $M\to\infty$, we have $s(M)\to0$. Letting $\vare\to0$ and $M\to\infty$, we obtain
$$\dim_{\mathrm{H}}E(\al,\be)\le s(M)+2\vare\to 0.$$

\end{description}

\section{Proof of Theorem \ref{mainthm}: Lower bound}
\label{sec:lower-bound}

Recall that $\dim_{\mathrm{H}}E(\alpha,\beta)=0$ when $0<\frac{\beta}{1+\beta}\le\alpha\le\beta<1$. 
Thus it suffices to prove the lower bound for the case $0\le\alpha<\frac{\beta}{1+\beta}<\beta<1$.

\subsection{Construct subset of $E(\alpha,\beta)$}
Choose two sequences of integers $\{n_k\}_{k\ge 1}$ and $\{m_k\}_{k\ge 1}$ as follows.
\begin{itemize}
    \item If $\al>0$, set
    \[
    n_k=\left\lfloor\left(\frac{\beta(1-\alpha)}{\alpha(1-\beta)}\right)^k\right\rfloor+4k,\qquad 
    m_k=\left\lfloor\frac{\beta}{1-\beta}\left(\frac{\beta(1-\alpha)}{\alpha(1-\beta)}\right)^k\right\rfloor+2;
   \]
   \item If $\al=0$, set
   \[
   n_k=\left\lfloor\frac{2^{k!}}{(1-\beta)^k}\right\rfloor+4k,\qquad 
   m_k=\left\lfloor\frac{\beta}{1-\beta}\frac{2^{k!}}{(1-\beta)^k}\right\rfloor+2.
  \]
\end{itemize}
In both cases one checks that $m_k$ is increasing and satisfies the condition
\begin{equation}\label{mk3}
    2\le m_k<n_{k+1}-n_k\quad\text{for all }k\ge 1,\qquad \lim_{k\to\infty}m_k=\infty.
\end{equation}

For any $k\ge 1$, let
\[
p_k=\max\left\{p\in\mathbb{N}:p\le \frac{n_{k+1}-n_k}{m_k}<p+1\right\}.
\]
Define
\[
n'_1=n_1+1\quad\text{and}\quad n'_k=n_k+\sum_{i=1}^{k-1}(p_i+1).
\]
By \eqref{mk3}, we have
\begin{equation}\label{4.1}
\lim_{k\to\infty}\frac{n'_k}{n_k}=1+\lim_{k\to\infty}\frac{p_k+1}{n_{k+1}-n_k}=1.
\end{equation}

Let $M\ge 3$ be an integer and set
\begin{equation}\label{4.2}
   G(M)=\left\{x\in(0,1]:d_n(x)
\begin{cases}
    =2 & \text{if } n'_k+1\le n\le n'_k+m_k\ \text{for all}\ k\ge 1;\\
    =2M & \text{if } n=n'_k,\ n'_k+pm_k+1\ \text{for all}\ 1\le p\le p_k,\ k\ge 1;\\
    \in[2,M] & \text{otherwise}.
\end{cases}
\right\} 
\end{equation}

\begin{proposition}\label{Gcover}
$G(M)\subset E(\alpha,\beta)$.
\end{proposition}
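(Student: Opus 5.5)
The plan is to compute $\ell_n(x)$ explicitly for every $x\in G(M)$, up to an error that is $o(n)$. Then I will check that the ratio $\ell_n(x)/n$ has $\liminf$ equal to $\alpha$ and $\limsup$ equal to $\beta$, by comparing with the formulas \eqref{al1}--\eqref{be1} from the upper-bound section.

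\textbf{Step 1: the runs of $x$.} Fix $x\in G(M)$ and describe its runs using the rigid structure in \eqref{4.2}. The digit $2M$ exceeds $M$, so every occurrence of $2M$ is an isolated run of length $1$: its neighbours are either in $[2,M]$ or equal to $2$. Consequently the block $d_{n'_k+1}=\cdots=d_{n'_k+m_k}=2$ is flanked on the left by $d_{n'_k}=2M$ and on the right by $d_{n'_k+m_k+1}=2M$ (the case $p=1$). Hence it is a maximal run of length exactly $m_k$.

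Between $n'_k+m_k+1$ and $n'_{k+1}$, the separators $2M$ sit at positions $n'_k+pm_k+1$ for $1\le p\le p_k$. The free segments between consecutive separators therefore have length $m_k-1$. The final free segment, from $n'_k+p_km_k+2$ to $n'_{k+1}-1$, has length at most
$$
n_{k+1}-n_k+p_k-p_km_k<m_k+p_k,
$$
by the definition of $p_k$. So every run lying in the $k$-th gap has length less than $m_k+p_k$.

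\textbf{Step 2: the formula for $\ell_n$.} Since $m_k$ is increasing, Step 1 gives the following, up to an additive error bounded by $p_k$:
\begin{itemize}
\item $\ell_n(x)=m_k$ for $n'_k+m_k\le n\le n'_{k+1}+m_k$;
\item $\ell_n(x)=\max\{m_k,\,n-n'_{k+1}\}$ for $n'_{k+1}+m_k<n\le n'_{k+1}+m_{k+1}$.
\end{itemize}
The error $p_k$ only matters for $n$ near $n'_{k+1}$, where $n\ge n_{k+1}$. There
$$
\frac{p_k}{n}\le \frac{n_{k+1}-n_k}{m_k\,n_{k+1}}\le \frac{1}{m_k}\to0 .
$$
This holds in both regimes, including the case $\alpha=0$, where $p_k$ itself may be much larger than $m_k$. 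So the error is $o(n)$ and does not affect the limits.

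\textbf{Step 3: the limits.} On each range from Step 2, $\ell_n(x)/n$ is either decreasing (when $\ell_n$ is constant) or increasing (when $\ell_n=n-n'_{k+1}$). Hence the $\liminf$ is attained along $n=n'_{k+1}+m_k$ and the $\limsup$ along $n=n'_k+m_k$, exactly as in \eqref{al1}--\eqref{be1}. Using \eqref{4.1}, i.e.\ $n'_k/n_k\to1$, it remains to compute the limits of $m_k/(n_{k+1}+m_k)$ and $m_k/(n_k+m_k)$ for the chosen sequences.

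For $\alpha>0$, write $r=\frac{\beta(1-\alpha)}{\alpha(1-\beta)}$, so $r>1$ because $\alpha<\beta$. Then
$$
\frac{m_k}{n_k}\to\frac{\beta}{1-\beta},\qquad \frac{m_k}{n_{k+1}}\to\frac{\beta}{(1-\beta)r}=\frac{\alpha}{1-\alpha}.
$$
These give the limits $\beta$ and $\alpha$ respectively.

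For $\alpha=0$, the ratio $m_k/n_k$ still tends to $\beta/(1-\beta)$. Meanwhile $m_k/n_{k+1}\approx 2^{k!-(k+1)!}\to0$, giving $\liminf=0$.

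Along the way I must verify \eqref{mk3}, which guarantees that consecutive blocks do not overlap and that $p_k\ge1$. The condition $\alpha<\beta/(1+\beta)$ is what makes $m_k<n_{k+1}-n_k$ hold; the additive terms $4k$ and $2$ in the definitions of $n_k,m_k$ take care of small $k$.

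\textbf{Main obstacle.} The most delicate point is Step 1 together with the error control in Step 2. I must rule out any run in the free region being comparable to $n$. The final free segment can be longer than $m_k$ by up to $p_k$, and in the $\alpha=0$ case $p_k$ is not small compared with $m_k$. Everything rests on the bound $p_k/n_{k+1}\le 1/m_k$, so I will check that estimate carefully.
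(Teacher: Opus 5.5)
Your proposal is correct and follows the same route as the paper. You read off $\ell_n(x)$ from the block structure of $G(M)$, locate the extremal ratios at $n=n'_{k+1}+m_k$ and $n=n'_k+m_k$, and conclude via \eqref{4.1} that $m_k/(n_{k+1}+m_k)\to\alpha$ and $m_k/(n_k+m_k)\to\beta$.

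Your extra error control is genuinely needed, not just extra care. The last free segment of the $k$-th gap has length up to $m_k+p_k-2$, so the paper's formula $\ell_n(x)=m_k$ on $[n'_k+m_k,\,n'_{k+1}+m_k]$ can fail, and badly when $\alpha=0$. Your bound $p_j/n\le 1/m_j$ for $n\ge n_{j+1}$ is what shows the discrepancy is $o(n)$; apply it to $\max_{j\le k}p_j$ to cover leftover runs from earlier gaps, not only to $p_k$.
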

\begin{proof}
Fix $x\in G(M)$. 
Assume $n'_k+m_k\le n<n'_{k+1}+m_{k+1}$.  
Then we have
\[
\ell_n(x)=\begin{cases}
m_k, & \text{if}~~n'_k+m_k\le n\le n'_{k+1}+m_k,\\
n-n'_{k+1}, & \text{if}~~n'_{k+1}+m_k<n<n'_{k+1}+m_{k+1}.
\end{cases}
\]
Consequently,
\[
\liminf_{n\to\infty}\frac{\ell_n(x)}{n}
= \liminf_{k\to\infty}\frac{m_k}{n'_{k+1}+m_k}
= \liminf_{k\to\infty}\frac{m_k}{n_{k+1}+m_k}= \alpha,
\]
\[
\limsup_{n\to\infty}\frac{\ell_n(x)}{n}
= \limsup_{k\to\infty}\frac{m_k}{n'_k+m_k}
= \limsup_{k\to\infty}\frac{m_k}{n_k+m_k}= \beta,
\]
where we use \eqref{4.1}. Hence $x\in E(\alpha,\beta)$.
\end{proof}

Let $\mathcal{J}$ be the set of deleted positions $\{n'_k, n'_k+pm_k+1:k\ge1,1\le p\le p_k\}$. Define $f:G(M)\to f(G(M))$ 
by eliminating the digits in $\mathcal{J}$. That is
\begin{equation}\label{4.3}
    f(G(M))=\left\{x\in(0,1]:d_n(x)
\begin{cases}
    =2 & \text{if } n_k+1\le n\le n_k+m_k\ \text{for all}\ k\ge 1  ;\\        
    \in[2,M] & \text{if } 1\le n\le n_1~\text{and}~ n_k+m_k+1\le n\le n_{k+1}\ \text{for all}\ k\ge 1.
\end{cases}
\right\}
\end{equation}

Write $t(n)=\#\{j\le n:j\in \mathcal{J}\}$. We have 
\begin{equation}\label{lem:t(n)}
    \displaystyle\lim_{n\to\infty}\frac{t(n)}{n}=0.
\end{equation}
Indeed, assume $n'_k\le n<n'_{k+1}$ for some $k\ge 1$. Then
\begin{align*}
    \limsup_{n\to\infty}\frac{t(n)}{n}&\le \limsup_{n\to\infty}\frac{\sum_{i=1}^{k-1}(p_i+1)+\frac{n-n'_k}{m_k}+1}{n}\\
    &\le\limsup_{n\to\infty}\frac{\sum_{i=1}^{k-1}(p_i+1)}{n'_k}+\limsup_{n\to\infty}\frac{1}{m_k}+\limsup_{n\to\infty}\frac{1}{n'_k}\\
    &\le\limsup_{n\to\infty}\frac{p_k+1}{n'_{k+1}-n'_k}\le \limsup_{n\to\infty}\frac{p_k+1}{n_{k+1}-n_k}=0.
\end{align*}

\begin{proposition}\label{lem:Holder}
For any $\varepsilon>0$, the map $f$ is $\frac{1}{1+\varepsilon}$-H\"older continuous on 
$G(M)$, that is, there exists a constant $C_2=C_2(M)>0$ such that for any 
$x,y\in G(M)$,
\begin{equation}\label{4.4}
    |f(x)-f(y)|\le C_2|x-y|^{\frac{1}{1+\varepsilon}}.
\end{equation}
\end{proposition}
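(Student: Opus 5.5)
We prove the H\"older estimate through a comparison of cylinder lengths. Fix $x\neq y$ in $G(M)$ and let $n$ be the first index where their digits differ, so $x,y\in I_{n-1}(d_1,\ldots,d_{n-1})$ and $d_n(x)\neq d_n(y)$. The position $n$ cannot lie in $\mathcal{J}$, since digits there are fixed to $2M$. It also cannot lie in a run block $[n'_k+1,n'_k+m_k]$, where digits are fixed to $2$. Hence $n$ is a free position with $d_n(x),d_n(y)\in[2,M]$.

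Since $f$ only deletes the digits in $\mathcal{J}$, the points $f(x)$ and $f(y)$ share their first $n-1-t(n-1)$ digits. These are exactly the images of the common prefix with the $\mathcal{J}$-positions removed. Therefore, by Lemma \ref{lemmalength},
\[
|f(x)-f(y)|\le |I_{n-1-t(n-1)}(\ldots)|=\prod_{j\le n-1,\, j\notin\mathcal{J}}\frac{1}{d_j(d_j-1)} .
\]

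For the lower bound on $|x-y|$ we use the standard gap trick. Because $d_n(x)\neq d_n(y)$, the points lie in distinct order-$n$ subcylinders of $I_{n-1}$. We go two more levels and use the boundedness of digits to separate them. We would like a position $n+1$ or $n+2$ where the digits of points of $G(M)$ avoid the extreme digit values. The paper's choice of a marker $2M$ at $n'_k$ and at $n'_k+pm_k+1$, which lies outside $[2,M]$, is presumably designed for this separation.

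Concretely, one shows that the interval between $x$ and $y$ contains a full cylinder of order $n+1$ or $n+2$ whose digits are bounded by $2M$. This gives
\[
|x-y|\ge c(M)\,|I_{n-1}(d_1,\ldots,d_{n-1})|
\]
with $c(M)$ of order $(2M)^{-4}$ times a constant. This step is the main obstacle. When the next positions are forced, for example when they begin a long run of $2$'s, the separation can shrink like $2^{-m_k}$ per step. That is precisely why the separator digit $2M$ is inserted every $m_k$ steps and before every run. The plan is to check case by case, according to whether position $n+1$ is free, a run start, or a separator, that some position within the next few indices takes a value strictly inside $[2,2M]$ or differs from the extremes. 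Then a full cylinder of comparable size fits between the two points.

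Given this, it remains to compare the cylinder lengths:
\[
|I_{n-1}(d_1,\ldots,d_{n-1})|=|I_{n-1-t(n-1)}(\ldots)|\prod_{j\in\mathcal{J},\,j\le n-1}\frac{1}{2M(2M-1)} .
\]
Since every digit is at most $2M$, we have $|I_{n-1-t}|\ge (2M)^{-2(n-1)}$, and trivially $|I_{n-1-t}|\le 2^{-(n-1-t)}$. By \eqref{lem:t(n)}, $t(n-1)\le \delta (n-1)$ for $n$ large, for any $\delta>0$. Hence
\[
(2M)^{-2t(n-1)}\ge 2^{-2\delta\log_2(2M)(n-1)}\ge |I_{n-1-t}|^{\varepsilon},
\]
after choosing $\delta$ small in terms of $\varepsilon$ and $M$, using $|I_{n-1-t}|\le 2^{-(1-\delta)(n-1)}$. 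This yields $|x-y|\ge c(M)\,|I_{n-1-t}|^{1+\varepsilon}$, and hence $|f(x)-f(y)|\le C_2|x-y|^{1/(1+\varepsilon)}$. The finitely many small $n$ are absorbed into the constant $C_2=C_2(M)$.
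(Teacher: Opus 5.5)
Your overall route is the paper's. You take the first disagreement, bound $|f(x)-f(y)|$ by the cylinder of the common prefix with the $\mathcal{J}$-positions deleted, compare it with $|I_{n-1}(d_1,\ldots,d_{n-1})|$ through the factor $(2M(2M-1))^{t(n-1)}$, and use $t(n)/n\to 0$. You carry out that final comparison correctly. The gap is the separation inequality $|x-y|\ge c(M)\,|I_{n-1}(d_1,\ldots,d_{n-1})|$: you assert it (``one shows that\ldots''), call it the main obstacle, and leave only a vague case-by-case plan.

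The missing observation is that no case analysis is needed. Every digit of every point of $G(M)$ lies in $\{2,\ldots,M\}\cup\{2M\}$, so it is at most $2M$. Say $d_n(x)<d_n(y)$. Each branch of $T$ is increasing, and larger digits correspond to subintervals further to the left. So the whole cylinder $I_n(d_1,\ldots,d_{n-1},d_n(x))$ lies to the right of $y$. Inside it, the subcylinder $I_{n+1}(d_1,\ldots,d_{n-1},d_n(x),2M+1)$ lies to the left of $x$, since $d_{n+1}(x)\le 2M$. Hence this subcylinder sits between $x$ and $y$, and Lemma~\ref{lemmalength} gives
\[
|x-y|\ge \frac{|I_{n-1}(d_1,\ldots,d_{n-1})|}{d_n(x)(d_n(x)-1)\,(2M+1)\,2M}\ge \frac{|I_{n-1}(d_1,\ldots,d_{n-1})|}{M(M-1)(2M+1)\,2M}.
\]
This is the paper's separation step, with its $n$ shifted by one.

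Your diagnosis of the difficulty is also off. A forced run of $2$'s pushes a point toward the right endpoint of its cylinder, so it can make the gap on the side of $y$ (the point with the larger $n$-th digit) arbitrarily small relative to $|I_{n-1}|$. But you only need a gap on one side. On the side of $x$, the single next digit being at most $2M$ already keeps $x$ a fixed proportion away from the left endpoint of its cylinder. Whether position $n+1$ is free, inside a run, or in $\mathcal{J}$ plays no role.

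Likewise, the markers $2M$ are not there for separation. They delimit the runs of $2$'s and break up the free stretches, so that $\ell_n$ has the form used in Proposition~\ref{Gcover}. In the H\"older estimate they matter only because they are bounded. That is what produces the harmless factor $(2M(2M-1))^{t(n-1)}$, and it guarantees $d_{n+1}(x)\le 2M$ even when $n+1\in\mathcal{J}$. With this one-line separation argument inserted, your proof is complete.
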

\begin{proof}
Take $x\neq y$ and let $n$ be the largest integer with $d_i(x)=d_i(y)=d_i$ for all $1\le i\le n$.
Then $d_{n+1}(x)\neq d_{n+1}(y)$ and both are in $\{2,\dots,M\}$. Without loss with generality, we assume $d_{n+1}(x)<d_{n+1}(y)$.
The interval $I_{n+2}(d_1,\dots,d_n,d_{n+1}(x),2M+1)$ lies between $x$ and $y$, so
\begin{align*}
    |x-y|&\ge |I_{n+2}(d_1,\dots,d_n,d_{n+1}(x),2M+1)|
\ge |I_{n+2}(d_1,\dots,d_n,M,2M+1)|\\
&\ge \frac{1}{d_1(d_1-1)\cdots d_n(d_n-1)}\cdot\frac{1}{M(M-1)}\cdot\frac{1}{(2M+1)2M}\\
&\ge \frac{1}{(M-1)^4}|I_n(d_1,\dots,d_n)|.
\end{align*}

Let $(\overline{d_1,\dots,d_n})$ be the word obtained by deleting from $(d_1,\dots,d_n)$ those positions 
belonging to $\mathcal{J}$. Then $f(x)$ and $f(y)$ both lie in the cylinder 
$I_{n-t(n)}(\overline{d_1,\dots,d_n})$, hence
\begin{align*}
    |f(x)-f(y)|&\le |I_{n-t(n)}(\overline{d_1,\dots,d_n})|=\frac{1}{\prod_{j\notin\mathcal{J},\,j\le n} d_j(d_j-1)}\\
    &= |I_n(d_1,\dots,d_n)|\cdot\prod_{j\in\mathcal{J},\,j\le n} d_j(d_j-1)\\
&\le |I_n(d_1,\dots,d_n)|(2M(2M-1))^{t(n)}.
\end{align*}

By \eqref{lem:t(n)}, for any $\varepsilon>0$ there exists $N_2$ such that for $n\ge N_2$,
\[
t(n)\log(2M(2M-1))\le \varepsilon(n-t(n))\log 2.
\]
Hence,
\[
|I_{n-t(n)}(\overline{d_1,\dots,d_n})|
\le |I_n(d_1,\dots,d_n)|\cdot (2M(2M-1))^{t(n)}
\le |I_n(d_1,\dots,d_n)|\cdot 2^{\varepsilon(n-t(n))}.
\]

Since $|I_n(d_1,\dots,d_n)|\le 2^{-n}$ and $n-t(n)\ge (1-\delta)n$ for large $n$ (with $\delta$ small),
we have $2^{\varepsilon(n-t(n))}\le |I_n(d_1,\dots,d_n)|^{-\varepsilon(1-\delta)}$.
Choosing $\delta$ so that $\varepsilon(1-\delta)<\varepsilon'<\varepsilon$, we obtain
\[
|f(x)-f(y)|\le |I_n(d_1,\dots,d_n)|^{1-\varepsilon'}
\le \bigl((M-1)^4|x-y|\bigr)^{1-\varepsilon'}
\le C_2|x-y|^{1-\varepsilon'}.
\]
Since $\varepsilon'>0$ can be made arbitrarily small, this proves the $\frac{1}{1+\varepsilon}$-H\"older 
continuity for any $\varepsilon>0$.
\end{proof}

Combining Proposition~\ref{Gcover} and Proposition~\ref{lem:Holder}, using Lemma \ref{lemma2.5}, we obtain
\[
\dim_{\mathrm{H}}E(\alpha,\beta)\ge \dim_{\mathrm{H}}G(M)
\ge \frac{1}{1+\varepsilon}\dim_{\mathrm{H}}f(G(M)).
\]
Letting $\varepsilon\to0$ gives
\begin{equation}\label{4.6}
    \dim_{\mathrm{H}}E(\alpha,\beta)\ge\dim_{\mathrm{H}}f(G(M)).
\end{equation}

It remains to prove the lower bound of the Hausdorff dimension of $f(G(M))$.

\subsection{Symbolic description of $f(G(M))$}

For $n\ge 1$ set
\begin{align*}
    D_n&=\{(d_1,\dots,d_n)\in\Sigma^n:
d_j=2\ \text{ for }\ j\in[n_k+1,n_k+m_k]\cap[1,n]\ \text{and} \ k\ge1,\\\\ 
&~~~~~~~~~~~~~~~~~~~~~~~~~~~~~~~~~~~~~~~~~2\le d_j\le M\text{ otherwise}\},\\
    D&=\bigcup_{n=0}^{\infty}D_n\ (D_0=\emptyset).
\end{align*}
For $(d_1,\dots,d_n)\in D_n$, define
\begin{equation}\label{4.7}
    J_n(d_1,\dots,d_n)=\bigcup_{d_{n+1}:(d_1,\dots,d_{n+1})\in D_{n+1}} I_{n+1}(d_1,\dots,d_{n+1})
\end{equation}
an $n$-th order fundamental interval.
Then
\[
f(G(M))=\bigcap_{n\ge 1}\bigcup_{(d_1,\dots,d_n)\in D_n} J_n(d_1,\dots,d_n).
\]

From Lemma \ref{lemmalength}, the lengths satisfy:
\begin{itemize}
\item If $n_k+m_k\le n<n_{k+1}$, then
\begin{equation}\label{4.8}
    |J_n(d_1,\dots,d_n)|=\frac{M-1}{M}|I_n(d_1,\dots,d_n)|.
\end{equation}
\item If $n_{k+1}\le n<n_{k+1}+m_{k+1}$, then
\begin{equation}\label{4.9}
    |J_n(d_1,\dots,d_n)|=\frac{1}{2}|I_n(d_1,\dots,d_n)|.
\end{equation}
\end{itemize}

\subsection{Measure supported on $f(G_{(\{\widetilde{n_k}\},\{m_k\})}(M))$}

Let $n_0=m_0=0$ and define
\begin{equation}\label{4.10}
    u_k=\frac{m_k}{n_k-(n_{k-1}+m_{k-1})}\quad \text{for all}~ k\ge1.
\end{equation}
A direct computation using the asymptotics of $n_k,m_k$ shows that
\begin{equation}\label{4.11}
    \lim_{k\to\infty}u_k=\zeta:=\frac{\beta^2(1-\alpha)}{(1-\beta)[\beta-\alpha(1+\beta)]}.
\end{equation}
From Lemma \ref{continus}, we have $s_M(u)$ is continuous with respect to $u$. Therefore, for any $\varepsilon>0$ there exists $K_0$ such that for all $k\ge K_0$,
\begin{equation}\label{4.12}
    |s_M(u_k)-s_M(\zeta)|<\frac{\vare\log 2}{\log M(M-1)}=:\tilde{\vare}.
\end{equation}

We define a set function $\mu:\{J(d):d\in D\}\to \mathbb{R}^+$ as follows.
\begin{itemize}
    \item For any $(d_1,\ldots,d_{n_1+m_1})\in D_{n_1+m_1}$, define
$$\mu(J_{n_1+m_1}(d_1,\ldots,d_{n_1+m_1}))=\left(\frac{1}{d_1(d_1-1)\cdots d_{n_1+m_1}(d_{n_1+m_1}-1)}\right)^{s_M(u_1)}.$$
    \item For any
    $(d_1,\ldots,d_n)\in D_n$ with $n<n_1+m_1$, let
$$\mu(J_n(d_1,\ldots,d_n))=\sum_{d_{n+1},\ldots,d_{n_1+m_1}}\mu(J_{n_1+m_1}(d_1,\ldots,d_{n_1+m_1})).$$
    \item Once $\mu(J_{n_k+m_k}(d_1,\ldots,d_{n_k+m_k}))$ has been defined for some $k\ge 1$. Then for any $(d_1,\ldots, \linebreak d_{n_{k+1}+m_{k+1}})\in D_{n_{k+1}+m_{k+1}}$, let
\begin{align}\label{mu1}
    &\nonumber\mu(J_{n_{k+1}+m_{k+1}}(d_1,\ldots,d_{n_{k+1}+m_{k+1}}))\\
    &=\mu(J_{n_k+m_k}(d_1,\ldots,d_{n_k+m_k}))\left(\frac{1}{d_{n_k+m_k+1}\cdots d_{n_{k+1}+m_{k+1}}(d_{n_{k+1}+m_{k+1}}-1)}\right)^{s_M(u_{k+1})}.
\end{align}
   \item For any $(d_1,\ldots,d_n)\in D_n$ with $n_k+m_k<n<n_{k+1}+m_{k+1}$, define
   \begin{align}\label{mu2}
       \mu(J_n(d_1,\ldots,d_n))=\sum_{d_{n+1},\ldots,d_{n_{k+1}+m_{k+1}}}\mu(J_{n_{k+1}+m_{k+1}}(d_1,\ldots,d_{n_{k+1}+m_{k+1}})).
   \end{align}
\end{itemize}

Because $s_M(u_k)$ satisfies
\[
\sum_{t=2}^M\left(\frac{1}{2^{u_k}t(t-1)}\right)^{s_M(u_k)}=1,
\]
the set function satisfies the Kolmogorov’s consistency condition. So it can be uniquely extended to a probability measure supported on $f(G(M))$, still denoted by $\mu$. 

From the construction we have, for any $k\ge1$ and $(d_1,\dots,d_{n_k+m_k})\in D_{n_k+m_k}$,
\begin{align}\label{4.14}
&\mu(J_{n_k+m_k}(d_1,\dots,d_{n_k+m_k}))\nonumber\\
&\quad =\prod_{j=1}^k\left(\frac{1}{d_{n_{j-1}+m_{j-1}+1}(d_{n_{j-1}+m_{j-1}+1}-1)\cdots
d_{n_j+m_j}(d_{n_j+m_j}-1)}\right)^{s_M(u_j)}.
\end{align}

\begin{proposition}\label{prop:measure-bound}
Let $n>n_{K_0}+m_{K_0}$, where $K_0$ satisfies \eqref{4.12}. For any $\varepsilon>0$ there exists a constant $C_3=C_3(M)$ such that for every 
$(d_1,\dots,d_n)\in D_n$,
\begin{equation}\label{4.15}
    \mu(J_n(d_1,\dots,d_n))\le C_4|J_n(d_1,\dots,d_n)|^{s_M(\zeta)-\varepsilon}.
\end{equation}
\end{proposition}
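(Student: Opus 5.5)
We prove the bound first at the block endpoints $n=n_k+m_k$, then for the intermediate $n$ with $n_k+m_k<n<n_{k+1}+m_{k+1}$. Throughout, the key identity is the defining equation \eqref{suM} with $u=u_j$. Combined with \eqref{4.10}, it says that on the $j$-th stage the free block of length $n_j-(n_{j-1}+m_{j-1})$ together with the forced block of $m_j$ twos has total $s_M(u_j)$-mass exactly $1$. Equivalently, each free position carries the factor $(2^{u_j}d(d-1))^{-s_M(u_j)}$, and the forced $2$'s contribute $2^{-m_j s_M(u_j)}=2^{-u_j s_M(u_j)(n_j-n_{j-1}-m_{j-1})}$.

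\textbf{Step 1: endpoints.} Start from \eqref{4.14}. For $j\ge K_0$ replace $s_M(u_j)$ by $s_M(\zeta)-\tilde\vare$ using \eqref{4.12}. Every factor $d(d-1)\le M(M-1)$, so a stage of length $L_j=n_j+m_j-n_{j-1}-m_{j-1}$ changes by at most $(M(M-1))^{\tilde\vare L_j}=2^{\vare L_j}$. The first $K_0$ stages contribute a constant depending only on $M$ and $K_0$; absorb it into $C_4$. This gives
\[
\mu(J_{n_k+m_k})\le C\,|I_{n_k+m_k}|^{s_M(\zeta)}\,2^{\vare(n_k+m_k)}\le C\,|I_{n_k+m_k}|^{s_M(\zeta)-\vare},
\]
where the last inequality uses $|I_n|\le 2^{-n}$. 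Since $|J_n|\ge \frac12|I_n|$ by \eqref{4.8} and \eqref{4.9}, the same bound holds with $J$ in place of $I$.

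\textbf{Step 2: intermediate $n$.} There are two cases.

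(a) $n_k+m_k<n\le n_{k+1}$, inside the free region. By \eqref{mu2} and the product structure,
\[
\mu(J_n)=\mu(J_{n_k+m_k})\prod_{i=n_k+m_k+1}^{n}\bigl(2^{u_{k+1}}d_i(d_i-1)\bigr)^{-s_M(u_{k+1})},
\]
since the remaining free digits and the forced block sum to $1$. The extra factors $2^{-u_{k+1}s}\le1$ only help, so the claim follows from Step 1 as before.

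(b) $n_{k+1}<n<n_{k+1}+m_{k+1}$, inside the forced run of $2$'s. Here $\mu(J_n)=\mu(J_{n_{k+1}})$: all remaining digits are forced, so the mass does not decrease. Meanwhile $|J_n|$ shrinks like $2^{-(n-n_{k+1})}$. This is the main obstacle.

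The saving is that by (a) we have
\[
\mu(J_{n_{k+1}})\le C\,|I_{n_{k+1}}|^{s}\,2^{-u_{k+1}s\,(n_{k+1}-n_k-m_k)}=C\,|I_{n_{k+1}}|^{s}\,2^{-s m_{k+1}},
\]
where $s=s_M(u_{k+1})$. This is exactly $|I_{n_{k+1}+m_{k+1}}|^{s}$, because each forced $2$ contributes a factor $1/2$ to the length. Hence
\[
\mu(J_n)\le C\,|I_{n_{k+1}+m_{k+1}}|^{s}\le C\,|I_n|^{s},
\]
and the $\tilde\vare$ perturbation is handled as in Step 1. The forced-block mass was prepaid in the free region, which is what the factor $2^{u_k}$ in \eqref{suM} is designed to do. Finally, use \eqref{4.8} and \eqref{4.9} to pass from $I_n$ to $J_n$, losing only the constants $\frac{M-1}{M}$ and $\frac12$, which go into $C_4$.

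I expect the only delicate point to be the bookkeeping of the $\tilde\vare$ errors uniformly over $n$. This is handled by \eqref{4.12} together with $|I_n|\le 2^{-n}$.
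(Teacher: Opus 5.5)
Your argument is correct and has the same three-case structure as the paper's proof. The cases are: the endpoints $n=n_k+m_k$ via \eqref{4.14} and \eqref{4.12}; the free region $n_k+m_k<n\le n_{k+1}$; and the forced run of $2$'s, where $\mu(J_n)=\mu(J_{n_{k+1}+m_{k+1}})$ and $|J_{n_{k+1}+m_{k+1}}|\le |J_n|$. The one real difference is in the free region: you evaluate the remaining sum exactly as $2^{-u_{k+1}s_M(u_{k+1})(n-n_k-m_k)}\le 1$, using \eqref{suM} and $m_{k+1}=u_{k+1}(n_{k+1}-n_k-m_k)$, whereas the paper only bounds it by $(M(M-1))^{n\tilde{\vare}}$ through a comparison with exponent $s_M(\zeta)+\tilde{\vare}$ and so ends with exponent $s_M(\zeta)-2\vare$; your version is cleaner and loses no extra $\vare$ there.
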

\begin{proof}
We distinguish three cases. We take the notation $\Sigma_M=\{2,\ldots,M\}$.

\noindent\textbf{Case 1:} $n=n_k+m_k$.
Using \eqref{4.12} and \eqref{4.14},
\begin{align}
        &\nonumber\mu(J_{n_k+m_k}(d_1,\ldots,d_{n_k+m_k}))\\
        &\nonumber\le \prod_{j=K_0}^{k}\left(\frac{1}{d_{n_{j-1}+m_{j-1}+1}(d_{n_{j-1}+m_{j-1}+1}-1)\cdots d_{n_{j}+m_{j}}(d_{n_{j}+m_{j}}-1)}\right)^{s_M(\zeta)-\vare}\\
        &\nonumber= \prod_{j=1}^{K_0-1}\left(d_{n_{j-1}+m_{j-1}+1}(d_{n_{j-1}+m_{j-1}+1}-1)\cdots d_{n_{j}+m_{j}}(d_{n_{j}+m_{j}}-1)\right)^{s_M(\zeta)-\vare} \\
        &\nonumber~~~~~~~~~\cdot\prod_{j=1}^{k}\left(\frac{1}{d_{n_{j-1}+m_{j-1}+1}(d_{n_{j-1}+m_{j-1}+1}-1)\cdots d_{n_{j}+m_{j}}(d_{n_{j}+m_{j}}-1)}\right)^{s_M(\zeta)-\vare}\\
        &\le M^{2(n_{K_0}+m_{K_0})}|I_{n_k+m_k}(d_1,\ldots,d_{n_k+m_k})|^{s_M(\zeta)-\vare} \label{case1mu}\\
        &\nonumber\le M^{2(n_{K_0}+m_{K_0}+1)}|J_{n_k+m_k}(d_1,\ldots,d_{n_k+m_k})|^{s_M(\zeta)-\vare}.
    \end{align}
    where the last inequality comes from \eqref{4.8}.

\noindent\textbf{Case 2:} $n_k+m_k<n<n_{k+1}$.
Recall \eqref{mu1}, \eqref{mu2} and \eqref{case1mu}, we obtain
   \begin{align*}
       &\mu(J_n(d_1,\ldots,d_n))\\
       &=\mu(J_{n_k+m_k}(d_1,\ldots,d_{n_k+m_k}))\left(\frac{1}{d_{n_k+m_k+1}(d_{n_k+m_k+1}-1)\cdots d_n(d_n-1)}\right)^{s_M(u_{k+1})}\\
       &~~~~~~\cdot\sum_{\substack{ d_{n+1},\ldots,d_{n_{k+1}}\in\Sigma_M \\ d_{n_{k+1}+1}=\cdots=d_{n_{k+1}+m_{k+1}}=2}} \left(\frac{1}{d_{n+1}(d_{n+1}-1)\cdots d_{n_{k+1}+m_{k+1}}(d_{n_{k+1}+m_{k+1}}-1)}\right)^{s_M(u_{k+1})}\\
       &\le M^{2(n_{K_0}+m_{K_0})}|I_{n_k+m_k}(d_1,\ldots,d_{n_k+m_k})|^{s_M(\zeta)-\vare}\left(\frac{1}{d_{n_k+m_k+1}(d_{n_k+m_k+1}-1)\cdots d_n(d_n-1)}\right)^{s_M(\zeta)-\vare}\\
       &~~~~~~\cdot\sum_{\substack{ d_{n+1},\ldots,d_{n_{k+1}}\in\Sigma_M \\ d_{n_{k+1}+1}=\cdots=d_{n_{k+1}+m_{k+1}}=2}} \left(\frac{1}{d_{n+1}(d_{n+1}-1)\cdots d_{n_{k+1}+m_{k+1}}(d_{n_{k+1}+m_{k+1}}-1)}\right)^{s_M(u_{k+1})}\\
       &\le M^{2(n_{K_0}+m_{K_0})}|I_{n}(d_1,\ldots,d_{n})|^{s_M(\zeta)-\vare}\\
       &~~~~~~\cdot\sum_{\substack{ d_{n+1},\ldots,d_{n_{k+1}}\in\Sigma_M \\ d_{n_{k+1}+1}=\cdots=d_{n_{k+1}+m_{k+1}}=2}} \left(\frac{1}{d_{n+1}(d_{n+1}-1)\cdots d_{n_{k+1}+m_{k+1}}(d_{n_{k+1}+m_{k+1}}-1)}\right)^{s_M(u_{k+1})}.
   \end{align*}

Notice that
\begin{align*}  
    1&=\sum_{\sigma_1,\ldots,\sigma_{n-(n_k+m_k)}\in \Sigma_M} \left(\frac{1}{\sigma_1(\sigma_1-1)\ldots \sigma_{n-(n_k+m_k)}(\sigma_{n-(n_k+m_k)}-1)}\right)^{s_M(u_{k+1})}\\
    &~~~~\cdot\sum_{\substack{d_{n+1},\ldots,d_{n_{k+1}}\in \Sigma_M\\ d_{n_{k+1}+1}=\cdots=d_{n_{k+1}+m_{k+1}}=2}} \left(\frac{1}{d_{n+1}(d_{n+1}-1)\cdots d_{n_{k+1}+m_{k+1}}(d_{n_{k+1}+m_{k+1}}-1)}\right)^{s_M(u_{k+1})}.
\end{align*}

By \eqref{4.12} and the definition of $S_M(\zeta)$, we have
\begin{align*}
    &\sum_{\sigma_1,\ldots,\sigma_{n-(n_k+m_k)}\in\Sigma_M} \left(\frac{1}{\sigma_1(\sigma_1-1)\ldots \sigma_{n-(n_k+m_k)}(\sigma_{n-(n_k+m_k)}-1)}\right)^{s_M(u_{k+1})}\\
    &\ge \sum_{\substack{\sigma_{(n-(n_k+m_k))\frac{1}{1+\zeta}+1}=\ldots=\sigma_{n-(n_k+m_k)}=2\\\sigma_1,\ldots,\sigma_{(n-(n_k+m_k))\frac{1}{1+\zeta}}\in\Sigma_M}}\left(\frac{1}{\sigma_1(\sigma_1-1)\ldots \sigma_{n-(n_k+m_k)}(\sigma_{(n-(n_k+m_k))\frac{1}{1+\zeta}}-1)2\cdots2}\right)^{s_M(\zeta)+\tilde{\vare}}\\
    &\ge \left(\frac{1}{M(M-1)}\right)^{(n-(n_k+m_k))\tilde{\vare}}\ge \left(\frac{1}{M(M-1)}\right)^{n\tilde{\vare}}.
\end{align*}
Then we get
\begin{align*}
&(M(M-1))^{n\tilde{\vare}}\ge\\
    &~~~~~~~~\sum_{\substack{d_{n+1},\ldots,d_{n_{k+1}}\in \Sigma_M
\\d_{n_{k+1}+1}=\cdots=d_{n_{k+1}+m_{k+1}}=2}} \left(\frac{1}{d_{n+1}(d_{n+1}-1)\cdots d_{n_{k+1}+m_{k+1}}(d_{n_{k+1}+m_{k+1}}-1)}\right)^{s_M(u_{k+1})}.
\end{align*}
By the definition of \eqref{4.12}, we have
\begin{align*}
    \mu(J_n(d_1,\ldots,d_n))&\le M^{2(n_{K_0}+m_{K_0})}(M(M-1))^{n\tilde{\vare}}|I_n(d_1,\ldots,d_n)|^{s_M(\zeta)-\vare}\\
    &\le M^{2(n_{K_0}+m_{K_0})}|I_n(d_1,\ldots,d_n)|^{s_M(\zeta)-2\vare}\\
    &\le M^{2(n_{K_0}+m_{K_0}+1)}|J_n(d_1,\ldots,d_n)|^{s_M(\zeta)-2\vare}.
\end{align*}

\noindent\textbf{Case 3:} If $n_{k+1}\le n<n_{k+1}+m_{k+1}$. Given $d_{n+1}=\cdots=d_{n_{k+1}+m_{k+1}}=2$, we have
\begin{align*}
    \mu(J_n(d_1,\ldots,d_n))&= \mu(J_{n_{k+1}+m_{k+1}}(d_1,\ldots,d_{n_{k+1}+m_{k+1}}))\\
    &\le M^{2(n_{K_0}+m_{K_0}+1)}|J_{n_{k+1}+m_{k+1}}(d_1,\ldots,d_{n_{k+1}+m_{k+1}})|^{s_M(\zeta)-\vare}\\
    &\le M^{2(n_{K_0}+m_{K_0}+1)}|J_n(d_1,\ldots,d_n)|^{s_M(\zeta)-\vare}.
\end{align*}
Take $C_3=M^{2(n_{K_0}+m_{K_0}+1)}$. The proof is completed.

\end{proof}

\subsection{Gap between fundamental intervals}

For $(d_1,\dots,d_n)\in D_n$ let $g_n(d_1,\dots,d_n)$ be the distance from $J_n(d_1,\dots,d_n)$ 
to the nearest distinct $n$-th order fundamental interval.

\begin{proposition}\label{lem:gap}
There exists a constant $C_4=C_4(M)>0$ such that for every $(d_1,\dots,d_n)\in D_n$,
\begin{equation}\label{4.16}
    g_n(d_1,\dots,d_n)\ge C_4|J_n(d_1,\dots,d_n)|.
\end{equation}
\end{proposition}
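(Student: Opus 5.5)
We argue from the explicit geometry of L\"uroth cylinders in Lemma \ref{lemmalength}, separating the cases by which digits are admissible at position $n+1$. For $(d_1,\dots,d_n)\in D_n$ the cylinder $I_n(d_1,\dots,d_n)$ is subdivided into the consecutive subintervals $I_{n+1}(d_1,\dots,d_n,j)$, $j\ge 2$. These are arranged monotonically: $j=2$ sits at one end, and the subintervals accumulate at the other end as $j\to\infty$. Their lengths are
\[
|I_{n+1}(d_1,\dots,d_n,j)|=\frac{|I_n(d_1,\dots,d_n)|}{j(j-1)}.
\]
By \eqref{4.7}, $J_n(d_1,\dots,d_n)$ is the union of the admissible subintervals. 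If $n+1\in[n_k+1,n_k+m_k]$ for some $k$, the only admissible digit is $2$. Otherwise the admissible digits are $\{2,\dots,M\}$, which gives one contiguous block at the $j=2$ end. In both cases $J_n$ is a single interval. It contains the $j=2$ end of $I_n$ and leaves a complementary gap inside $I_n$, namely $\bigcup_{j>M}I_{n+1}(d_1,\dots,d_n,j)$ (respectively $\bigcup_{j>2}I_{n+1}(d_1,\dots,d_n,j)$). The gap has length $|I_n|/M$ (respectively $|I_n|/2$).

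Next we bound the distance to any other $n$-th order fundamental interval $J_n(d'_1,\dots,d'_n)$. Let $p$ be the first index with $d_p\neq d'_p$. Both intervals lie inside $I_{p-1}(d_1,\dots,d_{p-1})$, in distinct subcylinders $I_p(\dots,d_p)$ and $I_p(\dots,d'_p)$. The neighbouring fundamental interval on the \emph{gap side} of $J_n$ is separated from $J_n$ by at least the gap inside $I_n$. That gap has length at least $|I_n|/M\ge |J_n|/M$.

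The other side is harder, because there $J_n$ touches the $j=2$ endpoint of $I_n$. This endpoint may coincide with an endpoint of $I_{n-1}$, and so on up the tree. We go up to the level $q\le n$ where this endpoint stops being shared, i.e.\ the smallest $q$ such that $d_{q+1}=\dots=d_n$ place $J_n$ at the boundary-adjacent end at every level from $q$ onward. By the alternating orientation of L\"uroth cylinders, even placing the $j=2$ child at the boundary-adjacent end is determined by the parity of the level. At level $q$ the relevant endpoint is interior, and the nearest competing fundamental interval lies beyond a gap of length $\gtrsim |I_{q}|/M$. An equivalent way to see this is that any neighbouring interval must come from a sibling $I_{p}(\dots,d'_p)$ with $d'_p\le M$. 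Between $J_n$ and that sibling's $J$ sits either the unused tail $\bigcup_{j>M}$ of some cylinder of order at most $n$, or a piece of $I_n$ itself. Since $J_n\subset I_q$, we get $g_n\ge C|I_q|/M\ge C|J_n|/M$.

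The main obstacle is this bookkeeping of which end of each cylinder carries the digit-$2$ child and which carries the accumulating tail. The L\"uroth map is increasing on each branch but the branches are ordered decreasingly in $j$, so the orientation must be tracked carefully. We would first verify the following concrete mechanism. On the side of $J_n$ away from its own gap, the nearest other fundamental interval is separated by the tail gap of the parent cylinder at the first ancestor level where the orientation flips. That tail gap has length at least $|I_{p}|/M$ for some ancestor $I_p\supset I_n$, and hence at least $|J_n|/M$. This yields \eqref{4.16} with $C_4=1/M$ (up to an absolute factor), independent of $n$.
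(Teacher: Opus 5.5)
Your architecture is right, and on one point you are more careful than the paper. The paper bounds the right-hand gap only through the sibling $J_n(d_1,\ldots,d_{n-1},d_n-1)$, which tacitly assumes $d_n\ge 3$. That breaks down, for example, at $n=n_k+m_k$, where $d_n=2$ is forced, so climbing the tree is genuinely needed.

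However, the climbing step is not established, and the mechanism you propose for it rests on a false premise. L\"uroth cylinders do \emph{not} alternate in orientation. $T$ is increasing on every branch, and by Lemma~\ref{lemmalength}, if $a$ is the left endpoint of $I_n=I_n(d_1,\ldots,d_n)$, then $I_{n+1}(d_1,\ldots,d_n,j)=(a+|I_n|/j,\,a+|I_n|/(j-1)]$ at every level. So the digit-$2$ child is always at the right end, the tail always accumulates at the left end, and $J_n$ always contains the right endpoint of $I_n$; parity plays no role.

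Consequently your ``tail gap of the parent cylinder at the first ancestor level where the orientation flips'' is the wrong object. The tail of any ancestor $I_p\supset I_n$ sits at the left end of $I_p$, on the same side as $J_n$'s own gap, so it never separates $J_n$ from a right neighbour. Likewise, ``since $J_n\subset I_q$, we get $g_n\ge C|I_q|/M$'' does not follow. The containment says nothing about the size of the separating set, and that size is exactly what has to be proved.

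The repair is short. Let $q\le n$ be the largest index with $d_q\ge 3$; if none exists, the right endpoint of $J_n$ is $1$ and there is no right neighbour. Then $d_{q+1}=\cdots=d_n=2$, so $I_n$ and $I_q(d_1,\ldots,d_q)$ have the same right endpoint. Position $q$ is not forced, since forced digits equal $2$. Hence $(d_1,\ldots,d_{q-1},d_q-1)\in D_q$, and $I':=I_q(d_1,\ldots,d_{q-1},d_q-1)$ is the cylinder immediately to the right, with left endpoint equal to that common right endpoint.

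Every $n$-th order fundamental interval to the right of $J_n$ lies either in the admissible part of $I'$ or beyond $I'$. The left end of $I'$, namely $\bigcup_{j>M}I_{q+1}(d_1,\ldots,d_{q-1},d_q-1,j)$ (or $\bigcup_{j>2}$ if position $q+1$ is forced), contains none of them and has length at least $|I'|/M$. Since $|I'|=\frac{d_q}{d_q-2}|I_q(d_1,\ldots,d_q)|\ge|I_q(d_1,\ldots,d_q)|\ge|I_n|\ge|J_n|$, the right gap is at least $|J_n|/M$. Together with your correct left-side bound from the tail inside $I_n$, this gives the statement with $C_4=1/M$.
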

\begin{proof}
In this subsection, we focus on estimating the gaps between disjoint fundamental intervals, as defined in \eqref{4.7}, of the same order. For each $(d_1,\ldots,d_n)\in D_n$, we denote the distance between $J_n:=J_n(d_1,\ldots,d_n)$ and the nearest fundamental interval of the same order on its right (left, respectively) as $g^r(d_1,\ldots,d_n)$ ($g^{\ell}(d_1,\ldots,d_n)$, respectively). We define the gap as
$$g(d_1,\ldots,d_n):=\min\{g^r(d_1,\ldots,d_n),g^{\ell}(d_1,\ldots,d_n)\}.$$
Let $J_n^r:=J_n(d_1^r,\ldots,d_n^r)$ and $J_n^{\ell}:=J_n(d_1^{\ell},\ldots,d_n^{\ell})$ be the adjoining fundamental intervals (if exists) lying on the left and right of $J_n(d_1,\ldots,d_n)$.

\textbf{Case 1:} If $n_k+m_k\le n < n_{k+1}$. In this case, we have
\begin{align*}
    J_n&=\bigcup_{2\le d\le M}I_{n+1}(d_1,\ldots,d_n,d),\\
    J_n^{\ell}&=J_n(d_1,\ldots,d_{n-1},d_n+1)=\bigcup_{2\le d\le M}I_{n+1}(d_1,\ldots,d_n+1,d),\\
    J_n^r&=J_n(d_1,\ldots,d_{n-1},d_n-1)=\bigcup_{2\le d\le M}I_{n+1}(d_1,\ldots,d_n-1,d).
\end{align*}
By using \eqref{4.8}, and Lemma \ref{lemmalength}, we have
\begin{align*}
    g^{\ell}(d_1,\ldots,d_n)&\ge \sum_{d>M}|I_{n+1}(d_1,\ldots,d_n,d)|\\
    &=\sum_{d>M}\frac{1}{d_1(d_1-1)\cdots d_n(d_n-1)d(d-1)}\\
    &=\frac{1}{M}|I_{n}(d_1,\ldots,d_n)|=\frac{1}{M-1}|J_{n},(d_1,\ldots,d_n)|,
\end{align*}
and
$$g^{r}(d_1,\ldots,d_n)\ge \sum_{d>M}|I_{n+1}(d_1,\ldots,d_n-1,d)|\ge \frac{1}{M-1}|J_{n},(d_1,\ldots,d_n)|.$$
Thus, we obtain 
\begin{equation}\label{gn1}
    g_n(d_1,\ldots,d_n)\ge \frac{1}{M-1}|J_{n}(d_1,\ldots,d_n)|.
\end{equation}

\textbf{Case 2:} If $n_{k+1}\le n< n_{k+1}+m_{k+1}$. Observe that
$$J_n=J_n(d_1,\ldots,d_{n_{k+1}},\underbrace{2,\ldots,2}_{n-n_{k+1}}),\qquad J_n^{\ell}=J_n(d_1^{\ell},\ldots,d_{n_{k+1}}^{\ell},\underbrace{2,\ldots,2}_{n-n_{k+1}}).$$
We get $J_{n_{k+1}}^{\ell}$ is the left adjoining fundamental interval $J_{n_{k+1}}$, and the distance between $J_n$ and $J_n^{\ell}$ is larger than the one between $J_{n_{k+1}}^{\ell}$  and $J_{n_{k+1}}$. Then we get
\begin{align*}
    g_n^{\ell}&(d_1,\ldots,d_{n_k+m_k},2,\ldots,2)\ge g_{n_k+m_k}^{\ell}(d_1,\ldots,d_{n_k+m_k})\\
    &\ge \frac{1}{M-1}|J_{n_k+m_k}(d_1,\ldots,d_{n_k+m_k})|\\
    &\ge \frac{1}{M-1}|J_n(d_1,\ldots,d_{n_k+m_k},2,\ldots,2)|.
\end{align*}
Using the same method, we obtain
$$g_n^{r}(d_1,\ldots,d_{n_k+m_k},2,\ldots,2)\ge\frac{1}{M-1}|J_n(d_1,\ldots,d_{n_k+m_k},2,\ldots,2)|.$$
Thus, it follows that
\begin{equation}\label{gn2}
    g_n(d_1,\ldots,d_n)\ge \frac{1}{M-1}|J_{n}(d_1,\ldots,d_n)|.
\end{equation}
Take $C_4=\frac{1}{M-1}$, the proof is finished.
\end{proof}

\subsection{Completion of the lower bound}

\begin{proposition}\label{prop:ball-measure}
For any $\varepsilon>0$ there exists $\tilde{C}=\tilde{C}(M,\varepsilon)$ such that for every 
$x\in f(G(M))$ and sufficiently small $r>0$,
\begin{equation}\label{4.17}
    \mu(B(x,r))\le \tilde{C}\, r^{s_M(\zeta)-\varepsilon}.
\end{equation}
\end{proposition}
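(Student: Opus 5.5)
We will prove \eqref{4.17} by the standard gap argument, combining Proposition~\ref{prop:measure-bound} (measure of fundamental intervals) with Proposition~\ref{lem:gap} (gaps between fundamental intervals of the same order). Fix $x\in f(G(M))$ with digit sequence $(d_1,d_2,\dots)$, so $x\in J_n(d_1,\dots,d_n)$ for every $n$. Let $r>0$ be small, namely $r<C_4\,|J_{N}(d_1,\dots,d_{N})|$, where $N=n_{K_0}+m_{K_0}+1$. Then there is a unique $n\ge N$ with
\[
C_4|J_{n+1}(d_1,\dots,d_{n+1})|\le r< C_4|J_n(d_1,\dots,d_n)|.
\]
By Proposition~\ref{lem:gap}, the gap between $J_n(d_1,\dots,d_n)$ and any other $n$-th order fundamental interval is at least $C_4|J_n(d_1,\dots,d_n)|>r$. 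Since $\mu$ is supported on $f(G(M))\subset\bigcup_{D_n}J_n$, the ball $B(x,r)$ can charge only $J_n(d_1,\dots,d_n)$. Hence
\[
\mu(B(x,r))\le\mu(J_n(d_1,\dots,d_n)).
\]

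The first step is to compare $|J_n|$ with $|J_{n+1}|$. By Lemma~\ref{lemmalength}, \eqref{4.8} and \eqref{4.9}, we have $|I_{n+1}|\ge |I_n|/(M(M-1))$ whenever $d_{n+1}\le M$, which always holds for words in $D$. Also $|J_n|$ lies between $\tfrac12|I_n|$ and $|I_n|$. Together these give
\[
|J_n(d_1,\dots,d_n)|\le 2M(M-1)\,|J_{n+1}(d_1,\dots,d_{n+1})|\le \frac{2M(M-1)}{C_4}\,r .
\]

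Applying Proposition~\ref{prop:measure-bound} (with $\varepsilon/2$ in place of $\varepsilon$, to absorb the $2\varepsilon$ that appears in its Case~2) then yields
\[
\mu(B(x,r))\le C_3\,|J_n|^{s_M(\zeta)-\varepsilon}\le C_3\Bigl(\frac{2M(M-1)}{C_4}\Bigr)^{s_M(\zeta)-\varepsilon} r^{s_M(\zeta)-\varepsilon}.
\]
This gives \eqref{4.17} with $\tilde C$ depending only on $M$ and $\varepsilon$.

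The main obstacle is making sure the "only one interval is hit" step is legitimate. The gap in Proposition~\ref{lem:gap} is measured between fundamental intervals $J_n$ rather than cylinders $I_n$, so we must check that $B(x,r)\cap f(G(M))\subset J_n(d_1,\dots,d_n)$ really follows from $g_n>r$ together with $x\in J_n$. This holds because $x\in J_n$, so every point of the ball is within $r$ of $J_n$, while every other $J_n'$ is at distance more than $r$ from $J_n$.

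A secondary point is the ratio bound between consecutive $J$'s across the transitions $n=n_{k+1}-1\to n_{k+1}$ and $n=n_k+m_k-1\to n_k+m_k$. There the factor switches between $\tfrac{M-1}{M}$ and $\tfrac12$, which only changes the constant. Finally, by the mass distribution principle and letting $\varepsilon\to0$, then $M\to\infty$ using Lemma~\ref{continus}(3), we conclude $\dim_{\mathrm H}E(\alpha,\beta)\ge s(\zeta)$.
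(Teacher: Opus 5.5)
Your proposal is correct and follows the paper's argument. You isolate $J_n(d_1,\dots,d_n)$ via the gap estimate of Proposition~\ref{lem:gap}, bound its measure by Proposition~\ref{prop:measure-bound}, and pass from $|J_n|$ to $r$ through the bounded ratio $|J_n|/|J_{n+1}|$. Your small variations are harmless and slightly cleaner: choosing $n$ by $C_4|J_{n+1}|\le r<C_4|J_n|$ rather than by the gaps $g_n$, and running Proposition~\ref{prop:measure-bound} with $\varepsilon/2$. Since $D_N$ is finite, your $x$-dependent threshold $C_4|J_N(d_1,\dots,d_N)|$ can be replaced by its minimum over $D_N$, which gives the uniform $r_0$ the mass distribution principle asks for.
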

\begin{proof}
Let $r_0=\min\limits_{1\le j\le n_{K_0}+m_{K_0}}\min\limits_{(d_1,\ldots,d_j)\in D_j}g(d_1,\ldots,d_j)>0$.
Take $x\in f(G(M))$ and $0<r<r_0$.
There exists a unique sequence $d_1, d_2,\ldots$ satisfying $x\in J_k(d_1,\ldots,d_k)$ for all $k\ge 1$.
Choose $n$ such that
\[
g_{n+1}(d_1,\dots,d_{n+1})\le r < g_n(d_1,\dots,d_n).
\]
By Proposition~\ref{lem:gap}, the ball $B(x,r)$ can intersect only one $n$-th order cylinder, namely $J_n(d_1,\dots,d_n)$. From Proposition~\ref{prop:measure-bound}, we deduce that
\[
\mu(B(x,r))\le \mu(J_n(d_1,\dots,d_n))\le C_4|J_n(d_1,\dots,d_n)|^{s_M(\zeta)-\varepsilon}.
\]

By \eqref{4.8} and \eqref{4.9}, we see that
\begin{itemize}
    \item when $n_k+m_k\le n<n_{k+1}-1$ and $n_{k+1}\le n <n_{k+1}+m_{k+1}-1$, we have
    $$\frac{|J_n(d_1,\ldots,d_n)|}{|J_{n+1}(d_1,\ldots,d_{n+1})|}=\frac{|I_n(d_1,\ldots,d_n)|}{|I_{n+1}(d_1,\ldots,d_{n+1})|}=d_{n+1}(d_{n+1}-1),$$
    \item when $n=n_{k+1}-1$, we have
    $$\frac{|J_n(d_1,\ldots,d_n)|}{|J_{n+1}(d_1,\ldots,d_{n+1})|}=\frac{M}{2(M-1)}d_{n+1}(d_{n+1}-1),$$
    \item when $n=n_{k+1}+m_{k+1}-1$, we have
    $$\frac{|J_n(d_1,\ldots,d_n)|}{|J_{n+1}(d_1,\ldots,d_{n+1})|}=\frac{2(M-1)}{M}d_{n+1}(d_{n+1}-1).$$
\end{itemize}
Since $d_{n+1}\le M$, it holds that
    $$\frac{|J_n(d_1,\ldots,d_n)|}{|J_{n+1}(d_1,\ldots,d_{n+1})|}\le 2(M-1)^2$$
    for any $n_k+m_k\le n<n_{k+1}+m_{k+1}$ with $k\ge K_0$. 
    
Therefore,
    \begin{align*}
        \mu(B(x,r))&\le C_3|J_{n}(d_1,\ldots,d_{n})|^{s_M(\zeta)-\vare}\\
        &\le C_3 2(M-1)^2|J_{n+1}(d_1,\ldots,d_{n+1})|^{s_M(\zeta)-\vare}\\
        &\le  2(M-1)^2 C_3C_4^{-(s_M(\zeta)-\vare)}g_{n+1}(d_1,\ldots,d_{n+1})^{s_M(\zeta)-\vare}\\
        &\le 2(M-1)^2 C_3C_4^{-(s_M(\zeta)-\vare)}r^{s_M(\zeta)-\vare}.
    \end{align*}

\end{proof}

Applying the mass distribution principle (see e.g. \cite{kf90}) to Proposition~\ref{prop:ball-measure},
we obtain
\[
\dim_{\mathrm{H}}f(G(M))\ge s_M(\zeta)-\varepsilon.
\]
Letting $\varepsilon\to0$ gives $\dim_{\mathrm{H}}f(G(M))\ge s_M(\zeta)$.

Finally, by \eqref{4.6},
\[
\dim_{\mathrm{H}}E(\alpha,\beta)\ge s_M(\zeta).
\]
Clearly $s_M(u)\le s(u)$ and $\lim\limits_{M\to\infty}s_M(u)= s(u)$. Letting $M\to\infty$ yields
\[
\dim_{\mathrm{H}}E(\alpha,\beta)\ge s(\zeta)
= s\!\left(\frac{\beta^2(1-\alpha)}{(1-\beta)[\beta-\alpha(1+\beta)]}\right).
\]

This completes the proof of the lower bound. Together with the upper bound established in Section~\ref{sec:upper-bound},
Theorem~\ref{mainthm} is proved.

\end{document}